\pgfplotsset{compat=newest}
\newtheorem{thm}{Theorem}
\newtheorem{theorem}{Theorem}
{Lemma}
\newtheorem{lem}
{Lemma}
{Proposition}
\newtheorem{corollary}
{Corollary}
\theoremstyle{definition}
\newtheorem{definition}{Definition}
\newtheorem{examples}{Examples}
\newtheorem{remark}{Remark}
\newtheorem{conj}{Conjecture}
\theoremstyle{remark}
\numberwithin{equation}{section}
\newcommand{\ID}{\mathbb{D}}
\newcommand{\IC}{{\mathbb C}}
\newcommand{\ds}{\displaystyle}
\newcommand{\dist}{{\operatorname{dist}}}
\def\be{\begin{equation}}
\def\ee{\end{equation}}
\newcounter{minutes}\setcounter{minutes}{\time}
\newcounter{hours}\setcounter{hours}{\time}
\begin{document}
\thispagestyle{empty} \setcounter{page}{1}


\title[Bohr radius for subordination and $K$-quasiconformal harmonic mappings]
{Bohr radius for subordination and $K$-quasiconformal harmonic mappings}

\thanks{
File:~\jobname .tex,
          printed: \number\day-\number\month-\number\year,
          \thehours.\ifnum\theminutes<10{0}\fi\theminutes}


\author[Z. Liu]{ZhiHong Liu }
\address{Z. Liu, College of Science, Guilin University of Technology,
Guilin 541004, Guangxi, People's Republic of China.}
\email{\textcolor[rgb]{0.00,0.00,0.84}{liuzhihongmath@163.com}}

\author[S. Ponnusamy]{Saminathan Ponnusamy}
\address{S. Ponnusamy, Department of Mathematics,
Indian Institute of Technology Madras, Chennai-600 036, India}
\email{\textcolor[rgb]{0.00,0.00,0.84}{samy@iitm.ac.in}}


\subjclass[2010] {Primary: 30A10, 30C45, 30C62; Secondary: 30C75}

\keywords{Harmonic mappings, starlike and convex functions, Bohr radius, subordination,  $K$-quasiconformal mappings}

\begin{abstract}
The present article concerns the Bohr radius for $K$-quasiconformal  sense-preserving harmonic mappings $f=h+\overline{g}$
in the unit disk $\mathbb{D}$ for which the analytic
part $h$ is subordinated to some analytic function $\varphi$, and the purpose is to look into two
cases: when $\varphi$ is convex, or a general univalent function in $\ID$. The results
state that if
$h(z) =\sum_{n=0}^{\infty}a_n z^n$ and $g(z)=\sum_{n=1}^{\infty}b_n z^n$, then
$$\sum_{n=1}^{\infty}(|a_n|+|b_n|)r^n\leq \dist (\varphi(0),\partial\varphi(\ID)) ~\mbox{ for $r\leq r^*$}
$$
and give estimates for the largest possible $r^*$ depending only on the geometric
property of $\varphi (\ID)$ and the parameter $K$. Improved versions of the theorems are
given for the case when $b_1 = 0$ and corollaries are drawn for the case when
$K\rightarrow \infty$.

\end{abstract}

\maketitle
\section{Introduction and Preliminaries}
A classical theorem of Bohr states that \cite{Bohr1914}, if $f$ is a bounded analytic function on the open unit disk $\mathbb{D}:=\{z\in\mathbb{C}:|z|<1\}$, with
power series of the form $f(z)=\sum_{n=0}^{\infty}a_n z^n$, then
\be\label{sub}
B_{f}(r):=\sum_{n=0}^{\infty}|a_n| r^n\leq \|f\|_\infty ~\mbox{ for all }~ |z|=r\leq \frac{1}{3}
\ee
and the constant $1/3$, often called the Bohr radius, cannot be improved. This inequality
known as Bohr's inequality, was originally obtained in 1914 by H. Bohr  for $0\leq r\leq 1/6$.
The fact that the inequality is
actually true for $0\leq r\leq 1/3$ and that $1/3$  is the best possible constant was obtained independently
by M.~Riesz, I.~Schur and F.~Wiener. Bohr's and Wiener's proofs can be found in \cite{Bohr1914}.
Several improved versions of the Bohr inequality are established.

For example, in a related development, Kayumov and Ponnusamy \cite{KayPon3} gave several improved versions of Bohr's inequality. Some of them may now be recalled.

\begin{thm}\label{KayPon8-Additional4}
Suppose that $f(z) = \sum_{k=0}^\infty a_k z^k$ is analytic in $\ID$, $|f(z)| \leq 1$ in $\ID$ and  $S_r$ denotes the area of the
image of the subdisk $|z|<r$ under the mapping $f$. Then
\begin{equation*}\label{Eq_Th3}
\sum_{k=0}^\infty |a_k|r^k+\frac{16}{9}\left (\frac{S_r}{\pi}\right )  \leq 1 ~\mbox{ for  }~ r \leq \frac{1}{3},
\end{equation*}
and the constants $1/3$ and $16/9$ cannot be improved. Moreover,
\begin{equation*}\label{Eq2_Th3}
|a_0|^2+\sum_{k=1}^\infty |a_k|r^k+\frac{9}{8}\left (\frac{S_r}{\pi}\right )  \leq 1 ~\mbox{ for  }~ r \leq \frac{1}{2},
\end{equation*}
and the constants $1/2$ and $9/8$ cannot be improved.
\end{thm}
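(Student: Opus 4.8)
The aim is to reduce each inequality, via sharp Schwarz--Pick-type coefficient estimates, to an elementary rational inequality in the single parameter $a:=|a_0|$ (two parameters for the second statement), the reduction being arranged so that the Koebe-type map $\varphi_a(z)=(a-z)/(1-az)$ --- which will also give sharpness --- is simultaneously extremal for the Bohr sum, for the area term, and for $|a_0|$ itself.

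I would begin from the area identity $S_r/\pi=\sum_{k\ge 1}k|a_k|^2r^{2k}$ together with the Schur parametrization $f=(a_0+zf_1)/(1+\overline{a_0}zf_1)$, where $f_1$ is again bounded by $1$. This yields $\sum_{k\ge1}a_kz^k=(1-a^2)G(z)$ with $G=M\circ w$, $M(\zeta)=\zeta/(1+\overline{a_0}\zeta)$ (so $|M_k|=a^{k-1}$) and $w(z)=zf_1(z)$ a self-map of $\mathbb D$ fixing the origin; in particular $G\prec M$. Writing $a_k=(1-a^2)b_k$ and applying Rogosinski's theorem on subordinate functions gives $\sum_{k=1}^{n}|b_k|^2\le\sum_{k=1}^{n}|M_k|^2$ for every $n$. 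Since the weights $kr^{2k}-(k+1)r^{2k+2}$ are nonnegative as soon as $r^2\le 1/2$, an Abel summation converts this into the \emph{sharp} area bound
\[
\frac{S_r}{\pi}=\sum_{k\ge1}k|a_k|^2r^{2k}\le (1-a^2)^2\sum_{k\ge1}k\,a^{2k-2}r^{2k}=\frac{(1-a^2)^2r^2}{(1-a^2r^2)^2}\qquad\Bigl(r\le\tfrac1{\sqrt2}\Bigr),
\]
i.e.\ exactly the value attained by $\varphi_a$. It is essential here to use Rogosinski's partial-sum inequality and not merely $\sum_{k\ge1}|a_k|^2\le 1-a^2$, which would give only the too-weak bound $(1-a^2)r^2$. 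For the Bohr sum I would exploit that the weighted Wiener norm $\|h\|_r:=\sum_k|h_k|r^k$ is submultiplicative: from $G=zf_1\sum_{j\ge0}(-\overline{a_0}zf_1)^j$ one gets $\sum_{k\ge1}|a_k|r^k=(1-a^2)\|G\|_r\le(1-a^2)\|zf_1\|_r/(1-a\|zf_1\|_r)$, and $\|zf_1\|_r=rB_{f_1}(r)\le r$ for $r\le 1/3$ by Bohr's classical theorem applied to $f_1$. Hence $\sum_{k\ge1}|a_k|r^k\le (1-a^2)r/(1-ar)$ for $r\le 1/3$, once more the value for $\varphi_a$.

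With these two estimates the first assertion, whose left side increases in $r$, reduces to verifying at $r=1/3$ that
\[
\Phi(a):=a+\frac{1-a^2}{3-a}+\frac{16(1-a^2)^2}{(9-a^2)^2}\le 1,\qquad a\in[0,1],
\]
and a short computation gives $1-\Phi(a)=\dfrac{2(1-a)^3\,(a^2+12a+19)}{(3-a)^2(3+a)^2}\ge 0$, with equality only at $a=1$. For sharpness of both constants one tests $f=\varphi_a$ and lets $a\to 1^-$: expanding the left-hand side in powers of $1-a$, the first-order term vanishes precisely when $r=1/3$ and the second-order term vanishes precisely when the area coefficient equals $16/9$, so any larger $r$ or larger coefficient makes the expression exceed $1$ for $a$ close to $1$.

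The second statement is treated identically, except that the Bohr-sum step is where the genuine difficulty lies: for $1/3<r\le 1/2$ one has $B_{f_1}(r)>1$ in general, so the Wiener-norm estimate must be run while carrying the extra parameter $t:=|f_1(0)|=|a_1|/(1-a^2)$, using $B_{f_1}(r)\le t+(1-t^2)r/(1-r)$ and, in parallel, refining the area bound in terms of $t$ (note $b_1=f_1(0)$). This leads to a two-parameter rational inequality in $(a,t)$ that must be shown $\le 1$ for $r\le 1/2$; I expect this bookkeeping --- rather than any single new idea --- to be the main obstacle. The sharpness of the pair $(1/2,\,9/8)$ is then the exact analogue of the computation above, with $f=\varphi_a$, $a\to1^-$.
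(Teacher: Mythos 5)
This statement is quoted in the paper as Theorem~A from \cite{KayPon3} and is not proved there, so the comparison can only be with that source. Your treatment of the \emph{first} inequality is essentially complete and correct, and close in spirit to the known proof: the area estimate $S_r/\pi\le\sum_{k\ge1}k|a_k|^2r^{2k}\le (1-a^2)^2r^2/(1-a^2r^2)^2$ (your Rogosinski-plus-Abel-summation argument is valid for $r\le 1/\sqrt{2}$, which covers both radii; note only that $S_r/\pi\le\sum_{k\ge1}k|a_k|^2r^{2k}$ is an inequality, not an identity, unless $f$ is univalent --- harmless here), the submultiplicativity bound $\sum_{k\ge1}|a_k|r^k\le(1-a^2)r/(1-ar)$ for $r\le1/3$ obtained by applying the classical Bohr inequality to $f_1$, the identity $1-\Phi(a)=2(1-a)^3(a^2+12a+19)/\bigl((3-a)^2(3+a)^2\bigr)$, and the two-term expansion of the test family $\varphi_a$ as $a\to1^-$ all check out.

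The genuine gap is the \emph{second} inequality, which you leave as ``bookkeeping''; in fact the strategy you sketch cannot close. At $r=1/2$ the total budget is $1-a^2\approx 2(1-a)$ as $a\to1$, while your Wiener-norm chain with the extra parameter $t=|f_1(0)|$ only yields $\sum_{k\ge1}|a_k|r^k\le(1-a^2)\rho/(1-a\rho)$ with $\rho=rB_{f_1}(r)\le\tfrac12\bigl(t+(1-t^2)\bigr)$. Taking $t=1/2$ allows $\rho=5/8$, so the bound becomes $\approx\tfrac{10}{3}(1-a)$, exceeding the budget already at first order in $1-a$; since the area term is $O((1-a)^2)$, no $t$-refinement of the area estimate can repair a first-order deficit. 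So the proposed two-parameter reduction is provably too lossy, not merely tedious. A workable route --- essentially that of \cite{KayPon3} --- replaces the Wiener-norm trick by an $\ell^2$ tail estimate: Rogosinski with the decreasing weights $r^k$ gives $\sum_{k\ge1}|a_k|^2r^k\le(1-a^2)^2r/(1-a^2r)$ for all $r<1$, whence by Cauchy--Schwarz $\sum_{k\ge1}|a_k|r^k\le(1-a^2)r/\sqrt{(1-r)(1-a^2r)}$; combined with the same sharp area bound this is asymptotically exact to second order at $r=1/2$, $a\to1$, and reduces the claim to a one-variable inequality in $a$. Your sharpness computation for the pair $(1/2,\,9/8)$ via $\varphi_a$, $a\to1^-$, is fine.
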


\begin{thm}\label{KayPon8-1to3}
Suppose that $f(z) = \sum_{k=0}^\infty a_k z^k$ is analytic in $\ID$ and $|f(z)| \leq 1$ in $\ID$. Then we have
\begin{enumerate}
\item $\ds
|a_0|+\sum_{k=1}^\infty \left(|a_k|+\frac{1}{2}|a_k|^2\right)r^k  \leq 1 ~\mbox{ for  }~ r \leq \frac{1}{3},
$
and the constants $1/3$ and $1/2$  cannot be improved.
\item $\ds \sum_{k=0}^\infty |a_k|r^k + |f(z)-a_0|^2 \leq 1 ~\mbox{ for  }~ r \leq \frac{1}{3}, $
and the constant $1/3$  cannot be improved.
\item $\ds
|f(z)|^2+\sum_{k=1}^\infty |a_k|^2r^{2k} \leq 1 ~\mbox{ for  }~ r \leq \sqrt{\frac{11}{27}},
$
and the constant $11/27$ cannot be improved.
\end{enumerate}
\end{thm}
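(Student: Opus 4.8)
The plan is to reduce all three inequalities to one-variable estimates in $a:=|a_0|\in[0,1)$ (we may assume $a_0=a$ after multiplying $f$ by a suitable unimodular constant, which changes none of the quantities involved). The two bounds I would want as the backbone are the sharp coefficient majorant
$$\sum_{k=1}^{\infty}|a_k|r^k\le \frac{(1-a^2)r}{1-ar}\qquad\Bigl(0\le r\le \tfrac13\Bigr)$$
and the $L^2$-type estimate
$$\sum_{k=1}^{\infty}|a_k|^2 r^{2k}\le \frac{(1-a^2)^2r^2}{1-a^2r^2}\qquad(0\le r<1),$$
together with the classical facts $|a_k|\le 1-a^2$ $(k\ge 1)$ and the Schwarz--Pick estimate $|f(z)|\le \frac{a+r}{1+ar}$ on $|z|=r$. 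Both backbone bounds are equalities for $f(z)=\frac{a-z}{1-az}$, which I expect to be the extremal map.

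To prove the backbone bounds, set $g=\dfrac{f-a_0}{1-\overline{a_0}f}$, a Schwarz function, so that $f-a_0=(1-a^2)G$ with $G=\dfrac{g}{1+\overline{a_0}g}$ and $G(0)=0$. For the first bound, let $\phi(z)=g(z)/z$, which has $|\phi|\le 1$ in $\ID$ by Schwarz's lemma, and expand the geometric series (valid since $|\overline{a_0}g|<1$) to get $G(z)/z=\sum_{m\ge0}(-\overline{a_0})^m z^m\phi(z)^{m+1}$; applying Bohr's inequality \eqref{sub} to each bounded function $\phi^{m+1}$ gives $B_{\phi^{m+1}}(r)\le 1$ for $r\le\tfrac13$, whence $\sum_{k\ge1}|a_k|r^k=(1-a^2)\,r\,B_{G/z}(r)\le (1-a^2)\,r\sum_{m\ge0}(ar)^m=\frac{(1-a^2)r}{1-ar}$. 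For the second bound, invert the relation to $g=\dfrac{G}{1-\overline{a_0}G}$ and use $|g(z)|\le|z|$ to get $|G|^2\le r^2|1-\overline{a_0}G|^2$ on $|z|=r$; expanding and integrating over the circle, the cross term drops out because $\frac{1}{2\pi}\int_0^{2\pi}G(re^{i\theta})\,d\theta=G(0)=0$, which leaves $\frac{1}{2\pi}\int_0^{2\pi}|G(re^{i\theta})|^2\,d\theta\le\frac{r^2}{1-a^2r^2}$, and multiplying by $(1-a^2)^2$ gives the assertion.

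Granting these, the three parts follow by elementary calculus. For (1), using $|a_k|^2\le (1-a^2)|a_k|$ and the first bound,
$$|a_0|+\sum_{k\ge1}\Bigl(|a_k|+\tfrac12|a_k|^2\Bigr)r^k\le a+\Bigl(1+\tfrac{1-a^2}{2}\Bigr)\frac{(1-a^2)r}{1-ar};$$
the right side is increasing in $r$, and at $r=\tfrac13$ the inequality $\le 1$ collapses to $(1-a)^2(a+3)\ge 0$. (It is crucial that the denominator be $1-ar$ rather than the weaker $1-r$: the cruder majorant $(1-a^2)\tfrac{r}{1-r}$ would only yield a radius slightly below $\tfrac13$.) For (2), since $|f(z)-a_0|\le\sum_{k\ge1}|a_k|r^k\le T:=\frac{(1-a^2)r}{1-ar}$ and $t\mapsto t+t^2$ is increasing, the left side is $\le a+T+T^2$, again increasing in $r$; at $r=\tfrac13$ this reduces to $(1-a)^2(a+5)\ge 0$. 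For (3), the second bound and Schwarz--Pick give $|f(z)|^2+\sum_{k\ge1}|a_k|^2r^{2k}\le\bigl(\tfrac{a+r}{1+ar}\bigr)^2+\tfrac{(1-a^2)^2r^2}{1-a^2r^2}$, and after clearing denominators $\le 1$ becomes
$$\Psi_r(a):=(1-r^2)(1-a^2r^2)-(1-a^2)r^2(1+ar)^2=1-2r^2-2ar^3(1-a^2)+a^4r^4 \ge 0\quad(a\in[0,1]).$$
Since $\Psi_r$ is decreasing in $r$ for each fixed $a$ and has a unique critical point in $(0,1)$, which is a minimum, one finds $\min_{a\in[0,1]}\Psi_r(a)\ge 0$ precisely when $r^2\le\tfrac{11}{27}$; at $r=\sqrt{11/27}$ the minimum equals $0$ and is attained at the interior point $a^{*}=\sqrt{3/11}$, which is where the constant $11/27$ originates.

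Finally, sharpness: (1) and (2) are sharp along $f(z)=\frac{a-z}{1-az}$ as $a\to 1^{-}$ (every estimate above is then an equality and the limiting value is $1$), and the coefficient $\tfrac12$ in (1) is pinned by expanding that value at $r=\tfrac13$ to order $(1-a)^2$, where the quadratic terms cancel exactly when the coefficient is $\tfrac12$, so any larger coefficient fails. Part (3) is sharp for $f(z)=\frac{a^{*}-z}{1-a^{*}z}$ with $a^{*}=\sqrt{3/11}$, evaluated at $z=-r$ with $r=\sqrt{11/27}$. The step I expect to be the real work is establishing the two backbone bounds rigorously — above all getting $1-ar$ (not $1-r$) in the first one, which is exactly what forces the radius in (1) and (2) to equal $1/3$ — and, for (3), locating the interior extremum of $\Psi_r$ responsible for $11/27$.
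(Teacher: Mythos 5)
You should note at the outset that this paper contains no proof of Theorem~B at all: it is recalled as background from Kayumov--Ponnusamy \cite{KayPon3}, so the only meaningful comparison is with that source. Your argument is correct and, for parts (1)--(2), takes a genuinely different route. The proofs in \cite{KayPon3} essentially rest on the coefficient bound $|a_k|\le 1-|a_0|^2$, an $\ell^2$-type lemma for $\sum_{k\ge1}|a_k|^2r^k$, the Cauchy--Schwarz inequality and the Schwarz--Pick estimate; your backbone inequality $\sum_{k\ge1}|a_k|r^k\le (1-a^2)r/(1-ar)$ for $r\le 1/3$, obtained by writing $f-a_0=(1-a^2)G$, expanding $G/z=\sum_{m\ge0}(-\overline{a_0})^mz^m\phi^{m+1}$ with $\phi=g/z$, and applying the classical Bohr inequality \eqref{sub} to each power $\phi^{m+1}$, is a clean and valid alternative (the coefficientwise comparison and the interchange of summations are legitimate since all terms are nonnegative and each Taylor coefficient involves only finitely many $m$). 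I verified the reductions: at $r=1/3$ part (1) collapses to $(1-a)^2(a+3)\ge0$ and part (2) to $(1-a)^2(a+5)\ge0$, and your remark that the cruder majorant $(1-a^2)r/(1-r)$ would not reach $1/3$ is accurate. For part (3) your route (Schwarz--Pick plus $\sum_{k\ge1}|a_k|^2r^{2k}\le(1-a^2)^2r^2/(1-a^2r^2)$, reduction to $\Psi_r(a)=1-2r^2-2ar^3(1-a^2)+a^4r^4\ge0$, with $\partial_r\Psi\le0$, unique interior minimum, and minimum value $0$ at $a=\sqrt{3/11}$ when $r^2=11/27$) is essentially the standard one, and the algebra checks out.

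The only items still to be written out are the sharpness claims, which you sketch correctly but tersely: for (1) and (2) one needs the first-order expansion in $1-a$ showing that for each $r>1/3$ the functions $(a-z)/(1-az)$ violate the inequalities when $a$ is close to $1$ (and the second-order expansion at $r=1/3$ pinning the constant $1/2$, as you indicate), and for (3) one should note that $\Psi_r(a)<0$ for $r>\sqrt{11/27}$ at the appropriate $a$, so the same extremal family with $z=-r$ exceeds $1$ beyond the stated radius. All of this follows directly from the formulas you already have, so these are presentational rather than substantive gaps.
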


For the last two decades, Bohr's inequality has been revived and improved in many ways due to the
discovery of generalizations to domains in $\mathbb{C}^n$ and to more abstract settings. For background
information about this result and further work related to Bohr's phenomenon, we refer to
the recent surveys by Abu-Muhanna et al.~\cite{Ali2016}, B\'en\'eteau et al. \cite{BDK5},
Ismagilov et al. \cite{IsmaKayKayPon1},  Kayumova et al. \cite{KayKayPon2} and the references therein.
Some of the recent results from \cite{Ali2017,Kayumov1,Kayumov1708.05578,KayPon3} are included in the latest two surveys.
More generally, harmonic version of Bohr's inequality was discussed by Kayumov et al. in \cite{Kayumov2}
which will also be recalled below. For certain other results on harmonic Bohr's inequality, we refer to
\cite{Kayumov2,Evdoridis1709.08944}. We refer to \cite{LiPo-RM18} for Bohr's inequality for
the class of harmonic $\nu$-Bloch-type mappings as a generalization of harmonic $\nu$-Bloch mappings and to \cite{AlkKayPon1} for
the class of quasi-subordinations.

A harmonic mapping $f$ defined on $\ID$ is a complex-valued function $f = u + iv$,
where $u$ and $v$ are real-valued harmonic functions of $\ID$. It follows that $f$ admits the representation $f=h+\overline{g}$,
where $h$ and $g$ are analytic in $\ID$ known as the analytic and co-analytic parts of $f$, respectively.
We follow the convention that $g(0) = 0$ so that the representation $f=h+\overline{g}$ is unique and is called the canonical representation
of $f$ and thus $h$ and $g$ admit power series expansions of the form
$$h(z) =\sum_{n=0}^{\infty}a_n z^n ~\mbox{ and }~ g(z)=\sum_{n=1}^{\infty}b_n z^n, \quad z\in\ID.
$$
A locally univalent harmonic function $f$ in $\mathbb{D}$ is said to be sense-preserving if the Jacobian $J_f (z)$ of $f$ given by
$J_f (z)=|h'(z)|^2-|g'(z)|^2$, is positive in $\mathbb{D}$; or equivalently, its dilatation
$\omega =g'/h'$ is an analytic function in $\ID$ which maps  $\mathbb{D}$ into itself (See \cite{Lewy1936} or \cite{Duren1983}).

If a locally univalent and sense-preserving harmonic mapping $f=h+\overline{g}$ satisfies the condition
$$\left|\frac{g'(z)}{h'(z)}\right|\leq k<1 ,
$$
then $f$ is called   $K$-quasiconformal harmonic mapping on $\mathbb{D}$, where $K=\frac{1+k}{1-k}\geq 1$
(cf. \cite{Martio1968,Kalaj2008}, and also \cite{Zhu2016} for some recent investigation on harmonic $K$-quasiconformal self-mapping of $\ID$).
Obviously $k\rightarrow 1$ corresponds to the case $K\rightarrow \infty$.
Harmonic extension of the classical Bohr theorem was established in \cite{Kayumov2}. For example, they proved the
following results.

\begin{thm} \label{KSP4-th3}
Suppose that $f(z) = h(z)+\overline{g(z)}=\sum_{n=0}^\infty a_n z^n+\overline{\sum_{n=1}^\infty b_n z^n}$ is a sense-preserving
$K$--quasiconformal  harmonic mapping of the disk $\ID$, where $h$ is a bounded function in $\ID$.
Then we have
\begin{enumerate}
\item
$\ds \sum_{n=0}^\infty |a_n|r^n+\sum_{n=1}^\infty |b_n|r^n \leq ||h||_{\infty}~\mbox{ for  }~r \leq  \frac{K+1}{5K+1}.
$
The constant $(K+1)/(5K+1)$ is sharp.
\item
$\ds
|a_0|^2+\sum_{n=1}^\infty (|a_n|+|b_n|)r^n \leq ||h||_{\infty}~\mbox{ for  }~ r \leq \frac{K+1}{3K+1}.
$
The constant $(K+1)/(3K+1)$ is sharp.
\end{enumerate}
\end{thm}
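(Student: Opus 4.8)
The plan is to normalize so that $\|h\|_\infty=1$ (the claimed inequalities are homogeneous in $h$) and then reduce everything to two coefficient estimates: the classical bound $|a_n|\le 1-|a_0|^2$ for $n\ge1$, valid for any $h:\ID\to\overline{\ID}$, and the companion estimate for the co-analytic coefficients
$$|b_n|\le k\,(1-|a_0|^2),\qquad n\ge1 .$$
Granting these, both parts are a short computation. Put $a:=|a_0|\in[0,1]$. For $0\le r<1$,
$$\sum_{n=0}^{\infty}|a_n|r^n+\sum_{n=1}^{\infty}|b_n|r^n\le a+(1+k)(1-a^2)\,\frac{r}{1-r},$$
and the right-hand side is $\le 1=a+(1-a)$ exactly when $(1+k)(1+a)\,\frac{r}{1-r}\le 1$; the worst case is $a\to1$, so this holds for every admissible $h$ as soon as $r\le\frac{1}{3+2k}$, and one checks $\frac{1}{3+2k}=\frac{K+1}{5K+1}$. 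For part (2) the same bound gives $|a_0|^2+\sum_{n\ge1}(|a_n|+|b_n|)r^n\le a^2+(1+k)(1-a^2)\frac{r}{1-r}$, which is $\le1$ iff $(1+k)\frac{r}{1-r}\le1$, i.e. $r\le\frac{1}{2+k}=\frac{K+1}{3K+1}$, this time with no restriction coming from $a$.

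To establish the key estimate $|b_n|\le k(1-|a_0|^2)$ I would use the Schur representation $h=\dfrac{a_0+P}{1+\overline{a_0}P}$ with $P$ a Schwarz function ($P(0)=0$, $|P|\le1$), so $h'=\dfrac{(1-|a_0|^2)P'}{(1+\overline{a_0}P)^2}$. Since $\dfrac{P'}{(1+\overline{a_0}P)^2}=S'$ for $S:=\dfrac{P}{1+\overline{a_0}P}$, we get $g(z)=(1-|a_0|^2)\int_0^z\omega(t)S'(t)\,dt$. The Möbius map $w\mapsto\dfrac{w}{1+\overline{a_0}w}$ is convex univalent with derivative $1$ at the origin, and $P\prec\mathrm{id}$, so $S\prec\dfrac{z}{1+\overline{a_0}z}$; Rogosinski's subordination theorem then bounds the Taylor coefficients of $S$ by $1$, and combining this with $|\omega|\le k$ one gets that the $n$-th coefficient of $\int_0^z\omega S'$ has modulus at most $k$, which is the claim. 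This last step — propagating a coefficient bound for $S$ (subordinate to a convex univalent function) through the $\omega$-weighted primitive $\int_0^z\omega S'$ — is the delicate point and the part I expect to cost the most work; a clean route is to show $\int_0^z\omega(t)S'(t)\,dt\prec\dfrac{kz}{1+\overline{a_0}z}$ and invoke Rogosinski once more. (For part (1) alone one could instead use the Bohr-majorant estimate $\sum_{n\ge1}|b_n|r^n\le k\sum_{n\ge1}|a_n|r^n$ for $r\le1/3$, obtained from Bohr's inequality applied to $\omega/k$; but since $\frac{K+1}{3K+1}>1/3$, that is not enough for part (2), so the pointwise bound is genuinely needed.)

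For sharpness I would take $h(z)=\varphi_a(z):=\dfrac{a-z}{1-az}$ with $a\in(0,1)$ — a disk automorphism, so $\|h\|_\infty=1$ — together with $\omega\equiv k$, giving $g(z)=k\big(h(z)-a\big)$, hence $g'=\omega h'$ with $|\omega|=k$ and $J_f=(1-k^2)|h'|^2>0$. Then $a_0=a$, $|a_n|=a^{n-1}(1-a^2)$ and $|b_n|=k\,a^{n-1}(1-a^2)$ for $n\ge1$, so
$$\sum_{n=0}^{\infty}|a_n|r^n+\sum_{n=1}^{\infty}|b_n|r^n=a+(1+k)(1-a^2)\,\frac{r}{1-ar}.$$
Writing $a=1-\varepsilon$ and expanding in $\varepsilon$, the right-hand side equals $1+\varepsilon\big(2(1+k)\tfrac{r}{1-r}-1\big)+O(\varepsilon^2)$, which exceeds $1$ once $r>\frac{1}{3+2k}=\frac{K+1}{5K+1}$; letting $\varepsilon\to0$ shows the constant is best possible. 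The same family, with the $|a_0|^2$-modification, yields $a^2+(1+k)(1-a^2)\frac{r}{1-ar}=1+2\varepsilon\big((1+k)\tfrac{r}{1-r}-1\big)+O(\varepsilon^2)$, exceeding $1$ for $r>\frac{1}{2+k}=\frac{K+1}{3K+1}$, which proves sharpness in part (2). The single genuine obstacle is the co-analytic coefficient bound $|b_n|\le k(1-|a_0|^2)$; everything else is routine.
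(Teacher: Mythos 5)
Your reduction, the arithmetic giving the two radii $\frac{1}{3+2k}=\frac{K+1}{5K+1}$ and $\frac{1}{2+k}=\frac{K+1}{3K+1}$, and the sharpness analysis with $h(z)=(a-z)/(1-az)$, $\omega\equiv k$ are all fine and coincide with the standard extremals. But the single load-bearing step, the pointwise bound $|b_n|\le k(1-|a_0|^2)$, is exactly what you do not prove, and the route you sketch for it breaks. Your proposed subordination $\int_0^z\omega(t)S'(t)\,dt\prec \frac{kz}{1+\overline{a_0}z}$, in the case $a_0=0$, says precisely $\bigl|\int_0^z\omega(t)P'(t)\,dt\bigr|\le k$ for every Schwarz function $P$ and every $|\omega|\le k$ (the majorant is the univalent map $kz$, so subordination means containment in the disk of radius $k$). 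This is false: take $k=1$, $P(z)=z\,\frac{b-z}{1-bz}$ with $b=0.99$ (a Blaschke product, $P(0)=0$) and $\omega(z)=\frac{c-z}{1-cz}$ with $c=t_0\approx 0.868$, the critical point of $P$ in $(0,1)$. Then $\omega(t)P'(t)\ge 0$ on $[0,1)$ (both factors change sign at $t_0$), and already $\int_{0.95}^{1}\omega P'\,dt\ \ge\ 0.46\,|P(0.99)-P(0.95)|+0.86\,|P(1)-P(0.99)|\ \approx\ 0.30+0.86>1$, since $P(0.95)\approx 0.64$, $P(0.99)=0$, $P(1)=-1$. Hence $F(r)>k$ for $r$ near $1$, so $F\not\prec kz$ and Rogosinski's theorem cannot be invoked; whether the pointwise coefficient bound itself holds is left completely open by your argument, and with it both parts of the theorem remain unproven in your write-up.

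Note also that the pointwise bound is far stronger than what your computation needs, and the paper already contains the tool that closes the gap: Lemma \ref{lem:Kayumov2} (quoted from \cite{Kayumov2}) gives $\sum_{n\ge1}|b_n|^2r^n\le k^2\sum_{n\ge1}|a_n|^2r^n$, and combined with Cauchy--Schwarz and $|a_n|\le 1-|a_0|^2$ this yields $\sum_{n\ge1}|b_n|r^n\le k(1-|a_0|^2)\frac{r}{1-r}$, which is exactly the aggregated estimate your two computations use; this is how \cite{Kayumov2} proves the quoted Theorem \ref{KSP4-th3}, and it is the same device this paper uses in proving Theorem \ref{thQHC1}. (The present paper itself does not reprove Theorem \ref{KSP4-th3}; it cites it.) If you replace your pointwise claim by this two-line deduction, the rest of your argument, including the sharpness part, goes through verbatim; as it stands, the proposal has a genuine gap at its central estimate.
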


\begin{thm}\label{KSP4-th2}
Suppose that either $f=h+g$ or $f = h+\overline{g}$, where $h(z)=\sum_{n=1}^\infty a_n z^n$ and $g(z)=\sum_{n=1}^\infty b_n z^n$
are bounded analytic functions in $\ID$. Then
$$\sum_{n=1}^\infty (|a_n|+|b_n|)r^n \leq \max\{||h||_{\infty}, ||g||_{\infty}\} ~\mbox{ for  }~r \leq \sqrt{\frac{7}{32}}.
$$
This number $\sqrt{7/32}$ is sharp.
\end{thm}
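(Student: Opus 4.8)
The plan is to reduce the two-function inequality to the classical sharp bound for the ``Bohr majorant series'' of a single bounded analytic function. Since both sides are homogeneous in $f$, I would first normalise so that $\max\{\|h\|_\infty,\|g\|_\infty\}=1$, so that $h,g$ are bounded analytic self-maps of $\ID$ with $h(0)=g(0)=0$ (their expansions start at $n=1$). By the Schwarz lemma the functions $\omega_1(z):=h(z)/z$ and $\omega_2(z):=g(z)/z$ are analytic with $|\omega_1|\le1$, $|\omega_2|\le1$ in $\ID$, and $\omega_1(z)=\sum_{m\ge0}a_{m+1}z^m$, $\omega_2(z)=\sum_{m\ge0}b_{m+1}z^m$. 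Hence
\[
\sum_{n=1}^\infty(|a_n|+|b_n|)r^n=r\Bigl(\sum_{m=0}^\infty|a_{m+1}|r^m+\sum_{m=0}^\infty|b_{m+1}|r^m\Bigr),
\]
and everything comes down to estimating $\sum_{m\ge0}|\omega_m|r^m$ for an arbitrary analytic $\omega(z)=\sum_{m\ge0}\omega_mz^m$ with $|\omega|\le1$ in $\ID$.

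For that I would invoke the exact value of $M(r):=\sup\bigl\{\sum_{m\ge0}|\omega_m|r^m:\ |\omega|\le1\bigr\}$: Bohr's inequality (applied to $\omega$) gives $M(r)=1$ for $0\le r\le1/3$, while Bombieri's theorem gives $M(r)=\bigl(3-\sqrt{8(1-r^2)}\,\bigr)/r$ for $1/3\le r\le1/\sqrt2$, with extremal function the automorphism $\omega_a(z)=(a-z)/(1-az)$, $a=a(r)=\bigl(4-\sqrt{8(1-r^2)}\,\bigr)/(4r)$. As $1/3<\sqrt{7/32}<1/\sqrt2$, these two ranges suffice, and the identity above yields $\sum_{n\ge1}(|a_n|+|b_n|)r^n\le2r\,M(r)$. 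A short computation then finishes the inequality: for $r\le1/3$ one has $2rM(r)=2r\le2/3$; for $1/3\le r\le1/\sqrt2$ one has $2rM(r)=6-2\sqrt{8(1-r^2)}$, which increases in $r$ and equals $1$ exactly when $8(1-r^2)=25/4$, that is $r=\sqrt{7/32}$. Hence $\sum_{n\ge1}(|a_n|+|b_n|)r^n\le1$ for all $r\le\sqrt{7/32}$.

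For sharpness I would take $h(z)=g(z)=z\,\omega_a(z)=z(a-z)/(1-az)$ with $a=a(\sqrt{7/32})=3/\sqrt{14}$; then $\|h\|_\infty=\|g\|_\infty=1$ because $\omega_a$ is a disk automorphism, and expanding $z\omega_a$ one gets $\sum_{n\ge1}(|a_n|+|b_n|)r^n=2ar+2(1-a^2)r^2/(1-ar)$, which is increasing in $r$, equals $1$ at $r=\sqrt{7/32}$, and therefore exceeds $1$ for every larger $r<1$. So the constant $\sqrt{7/32}$ is best possible.

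The main obstacle is the estimate for $\sum_{m\ge0}|\omega_m|r^m$ in the second step. The bounds one gets for free --- either $|\omega_m|\le1-|\omega_0|^2$ for $m\ge1$ (leading to $\sum_{m\ge1}|\omega_m|r^m\le(1-|\omega_0|^2)r/(1-r)$) or Cauchy--Schwarz together with $\sum_{m\ge0}|\omega_m|^2\le1$ (leading to $\sum_{m\ge0}|\omega_m|r^m\le(1-r^2)^{-1/2}$) --- both only produce the radius $1/\sqrt5$, which is strictly smaller than $\sqrt{7/32}$. To reach the sharp constant one really needs the precise value of $M(r)$ on $[1/3,1/\sqrt2]$, i.e.\ Bombieri's extremal result, together with the observation that the automorphism extremal there is simultaneously the extremiser of the two-function bound; once that is in hand, everything else is routine bookkeeping.
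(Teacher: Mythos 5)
Your argument is correct, and the verification goes through: after normalizing $\max\{\|h\|_\infty,\|g\|_\infty\}=1$, Schwarz's lemma legitimately reduces everything to the majorant function $M(r)$ of a single self-map of $\mathbb{D}$, the quoted Bohr/Bombieri values of $M(r)$ on $[0,1/3]$ and $[1/3,1/\sqrt{2}]$ are stated correctly, $2rM(r)=6-2\sqrt{8(1-r^2)}=1$ does occur exactly at $r=\sqrt{7/32}$, and your extremal pair $h=g=z(a-z)/(1-az)$ with $a=3/\sqrt{14}$ gives $2ar+2(1-a^2)r^2/(1-ar)=\tfrac34+\tfrac14=1$ at that radius, so the sharpness claim is settled as well. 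Note, however, that the paper you are comparing against does not prove this statement at all: it is Theorem D, imported without proof from Kayumov--Ponnusamy--Shakirov \cite{Kayumov2}, so the only meaningful comparison is with that source. There the first step (factor out $z$ and bound the two majorant series separately, with the disk automorphism as the common extremal) is the same, but the crucial single-function estimate is obtained self-containedly from elementary coefficient lemmas for bounded functions (sharp Schwarz--Pick-type bounds used according to the size of the constant term, then optimized over it), which in effect reproves the relevant range $1/3\le r\le 1/\sqrt2$ of Bombieri's theorem rather than citing it. So your route buys brevity at the cost of importing a nontrivial classical result, while the source's route is longer but self-contained; you are right that the cheap bounds ($|c_n|\le 1-|c_0|^2$ or Cauchy--Schwarz with $\sum|c_n|^2\le 1$) stall at $1/\sqrt5$, so some form of the sharper estimate is genuinely unavoidable. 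The only cosmetic caveats: dispose of the trivial case $\max\{\|h\|_\infty,\|g\|_\infty\}=0$, and state explicitly that $M(r)$ is a supremum over all self-maps, so each of $h/z$ and $g/z$ is bounded by it individually.
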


The purpose of this article is to determine the Bohr radius for the class of
$K$-quasiconformal sense-preserving  harmonic mappings $f=h+\overline{g}$, where $h$ is subordinate to $\varphi$, where
$\varphi$ is either a general function in the convex family or in the univalent family.

The paper is organized as follows. In Section \ref{lem2}, we present main definitions and necessary lemmas that are required to state and prove
our main results. Section \ref{sec3-main} begins with examples containing test functions for which our main results could be used to derive several
new theorems and corollaries, and then we state and prove our main theorems and several of their consequences.
More precisely Theorems \ref{thQHC1} and \ref{thHC} generalize Theorem \ref{KayPon8-Additional4}(1) whereas  Theorems \ref{thULH} and \ref{thHS} essentially deal with the case when the subordinating function is univalent instead of convex. The article concludes with a conjecture.

\section{Necessary Lemmas}\label{lem2}

We need to recall some basic notions and results on subordination.

\begin{definition}
Let $\varphi $ and $g$ be analytic in $\ID$ with $\varphi (0)=g(0)$. Then we say that $g$ is subordinate to $\varphi$
(written by $g\prec \varphi$ or $g(z)\prec \varphi (z)$) if
$$g(z)=\varphi (\omega(z)) ~\mbox{ for $|z|<1$}
$$
for some analytic function $\omega$ on $\ID$ with $|\omega(z)|\leq |z|$ for $z\in\ID$. When $\varphi$ is univalent, $g\prec \varphi$ precisely
when $\varphi(0) = g(0)$ and $g(\ID)\subset \varphi (\ID)$.
\end{definition}

For basic details and results on subordination classes, see for example~\cite[Chapter 6]{Duren1983} or~\cite[p. 35]{Pommerenke1975}. Let $\mathcal{S}$ denote the class of all
univalent analytic mappings $\varphi$ on $\ID$ normalized by $\varphi(0)=0$ and $\varphi '(0)=1$.
Denote by $\mathcal{S}^{*}$ and $\mathcal{C}$ the subclass of $\mathcal{S}$ of mappings that map $\ID$ onto starlike
and convex domains, respectively. See \cite{Duren1983} for details on these classes and many other related subclasses of $\mathcal{S}$.
If $\varphi$ is univalent, then the following coefficient inequalities are well-known.

\begin{thm}\label{SCSB} (L. de Branges' Theorem)
Suppose that $g\prec \varphi $ and $g(z)=\sum_{n=1}^{\infty}b_n z^n$. If $\varphi\in \mathcal{S}$, then $|b_n|\leq n$ for $n\geq 2 $.
\end{thm}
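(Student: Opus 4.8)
The statement is the classical coefficient estimate for functions subordinate to a univalent function --- traditionally \emph{Rogosinski's conjecture} --- so the plan is to reduce it to de Branges' theorem rather than to prove anything from scratch. First I would record the reduction: for $\varphi(z)=z+\sum_{n\ge2}c_nz^n\in\mathcal{S}$, de Branges' theorem (formerly the Bieberbach conjecture) gives $|c_n|\le n$, with equality only for rotations of the Koebe function $z/(1-z)^2$. This already settles sharpness of the asserted inequality: take $g=\varphi$ equal to the Koebe function, so that the Schwarz function in the definition of subordination is $\omega(z)=z$ and $b_n=n$. The case $n=1$ is elementary, since $g=\varphi\circ\omega$ gives $b_1=\varphi'(0)\omega'(0)=\omega'(0)$ and $|\omega'(0)|\le1$ by the Schwarz lemma. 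The genuine content is therefore to transfer the coefficient bound from $\varphi$ to the composition $g=\varphi\circ\omega$, where $\omega(0)=0$ and $|\omega(z)|\le|z|$.

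Before invoking a heavy tool, I would check that the soft estimates are insufficient, which is what forces the appeal to de Branges. Littlewood's subordination theorem gives $\frac{1}{2\pi}\int_0^{2\pi}|g(\rho e^{i\theta})|\,d\theta\le\frac{1}{2\pi}\int_0^{2\pi}|\varphi(\rho e^{i\theta})|\,d\theta$, and combining this with the growth bound $|\varphi(z)|\le|z|/(1-|z|)^2$ and optimizing in $\rho\in(0,1)$ yields only $|b_n|=O(n^2)$; the $L^2$-form of Littlewood's theorem sharpens this to $O(n^{3/2})$, still short of the linear bound. One therefore needs a quadratic, Milin/Robertson-type inequality. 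The standard route, due to Rogosinski, is: the odd square-root transform $\psi(z)=\sqrt{\varphi(z^2)}=z+\sum_{k\ge1}\gamma_{2k+1}z^{2k+1}$ of $\varphi$ is again univalent, and Robertson's conjecture --- established by de Branges as a consequence of the Milin conjecture --- gives $\sum_{k=0}^{n-1}|\gamma_{2k+1}|^2\le n$ (here $\gamma_1=1$). Expressing $b_n$ through the $\gamma_{2k+1}$ together with the Taylor coefficients of an associated bounded function, one applies the Cauchy--Schwarz inequality against the Robertson bound and arrives at $|b_n|\le n$.

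The step I expect to be the main obstacle is precisely this last realization of $b_n$ as an inner product to which Robertson's inequality applies. The obstruction is that $\omega$ may have interior zeros of odd order, so $\sqrt{g(z^2)}$ need not be single-valued and one cannot simply ``take square roots'' of the subordination $g(z^2)\prec\varphi(z^2)=\psi(z)^2$; Rogosinski's argument sidesteps this by working with Goluzin-type coefficient and area inequalities rather than with an explicit square root of $\omega$. For the present paper, where Theorem~\ref{SCSB} serves only as an input lemma, I would simply cite Rogosinski's theorem together with de Branges' resolution of the Bieberbach and Milin conjectures (see, e.g., \cite{Duren1983}).
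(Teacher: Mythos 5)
Your proposal is correct and matches the paper's treatment: the paper states this result without proof, simply quoting it as a classical theorem (the Rogosinski conjecture for subordinate coefficients, settled by de Branges' resolution of the Milin/Robertson conjectures), which is exactly the citation you fall back on. Your sketch of the reduction --- square-root transform, Robertson's inequality $\sum_{k=0}^{n-1}|\gamma_{2k+1}|^2\le n$, and a Cauchy--Schwarz/Littlewood partial-sum argument that avoids taking a square root of $\omega$ --- is also sound, so there is nothing further to fix.
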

Because $\varphi\in \mathcal{C}$ if and only if $z\varphi '\in \mathcal{S^{*}}$, and $\mathcal{S^{*}}\subset \mathcal{S}$,
Theorem \ref{SCSB}, in particular gives the following:
\begin{enumerate}
\item [(1)] if $\varphi\in \mathcal{C}$, then $|b_n|\leq 1$ for $n\geq 2 $;
\item[(2)] if $\varphi\in \mathcal{S^{*}}$, then $|b_n|\leq n$ for $n\geq 2 $.
\end{enumerate}


Throughout this paper, we denote the class of all analytic functions $g$ in $\ID$ subordinate to a fixed univalent function $\varphi$ in $\ID$ by
$$S(\varphi)=\left\{g: \, g\prec \varphi\right\}.
$$
We say that the family $S(\varphi)$ has Bohr's phenomenon if for any $ g\in S(\varphi)$ and $g(z)=\sum_{n=0}^{\infty} b_n z^n\prec \varphi (z)$
there is an $r_\varphi$, $0<r_\varphi \leq 1$, such that (see \cite{Abu, Ali2016})
\begin{equation}\label{SBHR}
\sum_{n=1}^{\infty} \left|b_n z^n\right|\leq \dist (\varphi (0), \partial \Omega  ) ~\mbox{for $|z|<r_\varphi$,}
\end{equation}
where $\dist (\varphi (0), \partial \Omega )$ denotes the distance between $\varphi (0)$ and
the boundary $\partial \Omega $ of $\Omega =\varphi (\ID)$.

We observe that if  $\varphi(z)=(\alpha -z)/(1-\overline{\alpha}z)$ with $|\alpha|<1$, then $\Omega=\varphi (\ID) =\ID $,
$\varphi (0)=\alpha$ and $\dist (\varphi (0),\partial \Omega  )=1-|\alpha|=1-|b_0|$ so that \eqref{SBHR}
(and hence \eqref{sub}) holds with $r_\varphi =1/3$.

We can easily to obtain the following two lemmas  from \cite[p.~195-196]{Duren1983} (see also \cite{Ali2016,Pommerenke1975}).

\begin{lem}\label{lemUd}
Let $\varphi$ be an analytic univalent map from $\ID$ onto a simply connect domain $\Omega=\varphi (\ID)$. Then
\begin{equation*}
\frac{1}{4}|\varphi '(0)|\leq \dist (\varphi (0),\partial \Omega)\leq |\varphi '(0)|.
\end{equation*}
If $g(z)=\sum_{n=0}^{\infty}b_n z^n\prec \varphi (z)$, then
\begin{equation*}
  |b_n|\leq n|\varphi'(0)|\leq 4 n \,\dist (\varphi(0),\partial \Omega).
\end{equation*}
\end{lem}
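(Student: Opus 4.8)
The plan is to read off both inequalities from the Koebe one-quarter theorem applied to a suitably normalized version of $\varphi$, and then to combine the upper estimate for the coefficients of a normalized univalent function with the subordination principle. First I would pass to the function $\psi(z) = (\varphi(z) - \varphi(0))/\varphi'(0)$, which is analytic and univalent on $\ID$ with $\psi(0) = 0$ and $\psi'(0) = 1$, so $\psi \in \mathcal{S}$. By the Koebe one-quarter theorem, $\psi(\ID) \supseteq \{w : |w| < 1/4\}$, hence $\dist(0, \partial\psi(\ID)) \geq 1/4$; translating and rescaling back by $w \mapsto \varphi(0) + \varphi'(0) w$ (a similarity of the plane, which scales distances by $|\varphi'(0)|$ and maps $\psi(\ID)$ onto $\Omega$) gives $\dist(\varphi(0), \partial\Omega) \geq \frac14 |\varphi'(0)|$, which is the left inequality. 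For the right inequality, since $\varphi$ is conformal at $0$, the image $\Omega$ near $\varphi(0)$ is well approximated by the disk of radius $|\varphi'(0)|$; more robustly, one may invoke the growth estimate: the inverse branch shows that the boundary point of $\Omega$ nearest to $\varphi(0)$ lies at distance at most $|\varphi'(0)|$, which follows from the Schwarz lemma applied to the inverse of $\psi$ (since $\psi^{-1}$ maps the disk of radius $\dist(0,\partial\psi(\ID))$ into $\ID$ with derivative $1$ at the origin, that radius is at most $1$). Rescaling again yields $\dist(\varphi(0),\partial\Omega) \leq |\varphi'(0)|$.

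For the coefficient bound, suppose $g(z) = \sum_{n=0}^\infty b_n z^n \prec \varphi(z)$, so that $g = \varphi \circ \omega$ for some Schwarz-type function $\omega$ with $|\omega(z)| \le |z|$ and $\omega(0)=0$. Then $(g - b_0)/\varphi'(0) = \psi \circ \omega$ is subordinate to $\psi \in \mathcal{S}$; writing $\psi(\omega(z)) = \sum_{n=1}^\infty c_n z^n$, the Littlewood subordination theorem (or the classical fact that subordination to a univalent function preserves the $|c_n| \le n$ bound, cf.\ the Rogosinski coefficient theorem combined with de~Branges' theorem, Theorem~\ref{SCSB}) gives $|c_n| \le n$ for all $n \ge 1$. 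Since $b_n = \varphi'(0)\, c_n$ for $n \ge 1$, we obtain $|b_n| \le n |\varphi'(0)|$. Finally, combining this with the already-established left inequality $|\varphi'(0)| \le 4\,\dist(\varphi(0),\partial\Omega)$ yields $|b_n| \le n|\varphi'(0)| \le 4n\,\dist(\varphi(0),\partial\Omega)$, as claimed.

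The routine parts here are the rescaling bookkeeping and citing the Koebe theorem. The one genuinely delicate point is the coefficient estimate $|b_n| \le n|\varphi'(0)|$ for functions subordinate to a univalent $\varphi$: the straightforward Littlewood subordination theorem only gives an $\ell^2$-type bound on partial sums, so to get the clean pointwise bound $|c_n| \le n$ one must either quote Rogosinski's theorem on coefficients of subordinate functions together with de~Branges' theorem for the dominating univalent function, or reduce directly to the fact that $\psi \in \mathcal{S}$ has $|a_n| \le n$ and subordination does not increase these bounds. I would make sure to cite \cite[pp.~195--196]{Duren1983} precisely at this step, since that is where the argument is packaged. The rest of the lemma then falls out by linear rescaling.
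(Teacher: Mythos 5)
Your proof is correct and follows essentially the same route the paper intends: the paper gives no proof of Lemma~\ref{lemUd}, merely citing Duren pp.~195--196, and your argument (normalize to $\psi=(\varphi-\varphi(0))/\varphi'(0)\in\mathcal{S}$, Koebe one-quarter theorem for the lower distance bound, Schwarz lemma applied to $\psi^{-1}$ for the upper bound, and the Rogosinski--de~Branges coefficient bound for subordinates of univalent functions, i.e.\ the paper's Theorem~\ref{SCSB}) is precisely the standard argument behind that citation. The only detail worth noting is the case $n=1$, which Theorem~\ref{SCSB} states only for $n\ge 2$, but it is immediate since $c_1=\omega'(0)$ and $|\omega'(0)|\le 1$ by the Schwarz lemma, so your conclusion $|b_n|\le n|\varphi'(0)|\le 4n\,\dist(\varphi(0),\partial\Omega)$ stands for all $n\ge 1$.
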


\begin{lem}\label{lemCd}
Let $\varphi$ be an analytic univalent map from $\ID$ onto a convex domain $\Omega=\varphi(\ID)$. Then
\begin{equation}\label{Bohr-eq1x}
\frac{1}{2}|\varphi'(0)|\leq \dist (\varphi(0),\partial \Omega)\leq |\varphi'(0)|.
\end{equation}
If $g(z)=\sum_{n=0}^{\infty}b_n z^n\prec \varphi(z)$, then
\begin{equation*}
  |b_n|\leq|\varphi'(0)|\leq 2 \,\dist (\varphi(0),\partial \Omega).
\end{equation*}
\end{lem}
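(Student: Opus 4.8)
The plan is to reduce to a normalized convex function and then combine two classical one-variable facts: Schwarz's lemma applied to an inverse branch, and the Carath\'eodory-type estimate $|p'(0)|\le 2$ valid for any analytic $p$ on $\ID$ with $\RE p>0$ and $p(0)=1$. Set $d:=\dist(\varphi(0),\partial\Omega)$ and $F(z):=(\varphi(z)-\varphi(0))/\varphi'(0)$, so that $F$ is univalent with $F(0)=0$, $F'(0)=1$, $F(\ID)$ convex, and $\dist(0,\partial F(\ID))=d/|\varphi'(0)|=:\rho$. Since \eqref{Bohr-eq1x} is exactly the assertion $\tfrac12\le\rho\le 1$ after multiplying through by $|\varphi'(0)|$, and the coefficient bound reduces to $|b_n/\varphi'(0)|\le 1$ via the same normalization, it suffices to work with $F$.

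For the upper bound $\rho\le 1$ --- which needs only univalence and is the argument behind Lemma \ref{lemUd} --- note that $\{w:|w|<\rho\}\subset F(\ID)$, so the inverse branch $F^{-1}$ is a well-defined analytic map of $\{|w|<\rho\}$ into $\ID$ fixing $0$. Applying Schwarz's lemma to $w\mapsto F^{-1}(\rho w)$ gives $\rho\,|(F^{-1})'(0)|\le 1$, and $(F^{-1})'(0)=1/F'(0)=1$, hence $\rho\le 1$.

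For the lower bound $\rho\ge\tfrac12$ convexity is used. Choose $w_0\in\partial F(\ID)$ with $|w_0|=\rho$. Since the closed disk $\{|w|\le\rho\}$ lies in $\overline{F(\ID)}$ and $w_0$ is on the boundary of both sets, a supporting line of the convex set $\overline{F(\ID)}$ at $w_0$ also supports that disk at $w_0$, hence is perpendicular to the segment $[0,w_0]$. Writing $\zeta:=w_0/\rho$ (so $|\zeta|=1$), this says $\RE(\overline{\zeta}\,F(z))<\rho$ for all $z\in\ID$; therefore $p(z):=1-\overline{\zeta}\,F(z)/\rho$ has $\RE p>0$ and $p(0)=1$, and $p'(0)=-\overline{\zeta}F'(0)/\rho=-\overline{\zeta}/\rho$. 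The Cayley transform $(p-1)/(p+1)$ is a self-map of $\ID$ fixing $0$, so Schwarz's lemma gives $|p'(0)|\le 2$, i.e.\ $1/\rho\le 2$, which is $\rho\ge\tfrac12$. This establishes \eqref{Bohr-eq1x}.

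For the coefficient estimate, write $g=\varphi\circ\omega$ with $\omega$ a Schwarz function ($\omega(0)=0$, $|\omega(z)|\le|z|$). Then $\widetilde g(z):=(g(z)-g(0))/\varphi'(0)=F(\omega(z))$ is subordinate to the convex function $F$, with expansion $\sum_{n\ge 1}(b_n/\varphi'(0))z^n$. For $n=1$ one has $b_1/\varphi'(0)=\omega'(0)$, of modulus at most $1$ by Schwarz's lemma; for $n\ge 2$, Theorem \ref{SCSB} together with its consequence for $\varphi\in\mathcal C$ gives $|b_n/\varphi'(0)|\le 1$. Hence $|b_n|\le|\varphi'(0)|$ for all $n\ge 1$, and combining with the lower bound in \eqref{Bohr-eq1x} yields $|b_n|\le|\varphi'(0)|\le 2\,\dist(\varphi(0),\partial\Omega)$. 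All the ingredients here are classical, so there is no genuine obstacle; the only points to be careful about are that the coefficient estimate is to be read for $n\ge 1$ (it cannot hold for $b_0=\varphi(0)$), and that the one place convexity is really exploited in the coefficient bound is the Rogosinski-type subordination inequality invoked through Theorem \ref{SCSB}.
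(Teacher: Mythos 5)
Your proof is correct, and it is genuinely more self-contained than what the paper does: the paper offers no argument at all for this lemma, simply asserting that it follows "easily" from Duren (pp.~195--196), where the intended ingredients are the $\tfrac12$-covering theorem for convex univalent functions (equivalently the growth bound $|F(z)|\ge |z|/(1+|z|)$, which is how the paper's inequality \eqref{GTHMC} is usually obtained) together with Rogosinski's theorem on coefficients of functions subordinate to a convex function. You replace the covering/growth-theorem step by a direct argument: Schwarz's lemma applied to $w\mapsto F^{-1}(\rho w)$ for the upper bound in \eqref{Bohr-eq1x}, and for the lower bound a supporting-line argument at a nearest boundary point $w_0$, which converts convexity into the half-plane condition $\RE(\overline{\zeta}F(z))<\rho$ and then uses the Carath\'eodory-type bound $|p'(0)|\le 2$ via the Cayley transform; all steps check out (existence of $w_0$, the tangency of the supporting line to the inscribed disk, strictness of the inequality for interior points, and the derivative computation $|p'(0)|=1/\rho\le2$). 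For the coefficient bound you still lean on the paper's stated consequence of Theorem~\ref{SCSB} (really Rogosinski) for $n\ge 2$, after the correct normalization $\widetilde g=F\circ\omega$, and you rightly treat $n=1$ separately by Schwarz since the paper states that consequence only for $n\ge2$; your caveat that the estimate is meaningful only for $n\ge1$ (not for $b_0=\varphi(0)$) is also apt and matches how the lemma is used later. In short, your route buys a proof from first principles (Schwarz plus a supporting hyperplane) where the paper buys brevity by citation; the only non-elementary input you retain is the subordination coefficient bound for $n\ge2$, which the paper itself supplies.
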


Particularly, the well-known Growth Theorem implies that if $\varphi\in \mathcal{S}$ then
\begin{equation}\label{GTHMS}
\frac{1}{4}\leq \dist (0,\partial \varphi(\ID))\leq 1
\end{equation}
and if $\varphi\in \mathcal{C}$ then
\begin{equation}\label{GTHMC}
\frac{1}{2}\leq \dist (0,\partial \varphi(\ID))\leq 1.
\end{equation}
See~\cite[Theorems 2.6 and 2.15]{Duren1983} or \cite[p. 22]{Pommerenke1975}.
Note that \eqref{GTHMS} and \eqref{GTHMC} follow from Lemmas \ref{lemUd} and \ref{lemCd}, respectively.

The following lemma plays an important role in the proof of our results.

 \begin{lem}\label{lem:Kayumov2}
{\rm (see \cite[Lemma 2.1]{Kayumov2})}
Suppose that $h(z)=\sum_{n=0}^\infty a_n z^n$ and $g(z)=\sum_{n=0}^\infty b_n z^n$ are two analytic functions
in the unit disk $\ID$ such that $|g'(z)| \leq k |h'(z)|$ in $\ID$ and for some $k\in [0,1]$. Then
\begin{equation*}
\sum_{n=1}^\infty |b_n|^2r^n \leq k^2\sum_{n=1}^\infty |a_n|^2r^n ~\mbox{ for }~ |z|=r<1.
\end{equation*}
\end{lem}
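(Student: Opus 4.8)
The plan is to convert the pointwise hypothesis into an $L^2$-estimate on circles by Parseval's formula, and then remove the derivatives by an elementary integrating-factor argument. Write $h'(z)=\sum_{n=1}^{\infty}na_nz^{n-1}$ and $g'(z)=\sum_{n=1}^{\infty}nb_nz^{n-1}$. These series converge absolutely and uniformly on every circle $|z|=\rho$ with $0<\rho<1$, so Parseval's identity gives
\[
\frac{1}{2\pi}\int_0^{2\pi}\bigl|g'(\rho e^{i\theta})\bigr|^2\,d\theta=\sum_{n=1}^{\infty}n^2|b_n|^2\rho^{2n-2},\qquad
\frac{1}{2\pi}\int_0^{2\pi}\bigl|h'(\rho e^{i\theta})\bigr|^2\,d\theta=\sum_{n=1}^{\infty}n^2|a_n|^2\rho^{2n-2}.
\]
Integrating the hypothesis $|g'(z)|^2\le k^2|h'(z)|^2$ around the circle $|z|=\rho$ then yields
\[
\sum_{n=1}^{\infty}n^2|b_n|^2\rho^{2n-2}\le k^2\sum_{n=1}^{\infty}n^2|a_n|^2\rho^{2n-2}\qquad\text{for every }\rho\in[0,1).
\]

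Next I would put $x=\rho^2\in[0,1)$, set $c_n:=k^2|a_n|^2-|b_n|^2$, and define $F(x):=\sum_{n=1}^{\infty}c_nx^n$. The last display says exactly that $\sum_{n=1}^{\infty}n^2c_nx^{n-1}\ge 0$ on $[0,1)$. Since $xF'(x)=\sum_{n=1}^{\infty}nc_nx^{n}$ and hence $\bigl(xF'(x)\bigr)'=\sum_{n=1}^{\infty}n^2c_nx^{n-1}\ge 0$ on $[0,1)$, while $xF'(x)$ vanishes at $x=0$, integration from $0$ to $x$ gives $xF'(x)\ge 0$, so $F'(x)\ge 0$ on $(0,1)$. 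Combined with $F(0)=0$ this forces $F(x)\ge 0$ for all $x\in[0,1)$, i.e. $\sum_{n=1}^{\infty}|b_n|^2x^n\le k^2\sum_{n=1}^{\infty}|a_n|^2x^n$; taking $x=r$ completes the proof. All the term-by-term integrations and differentiations above are legitimate because the relevant power series converge uniformly on compact subsets of $\ID$.

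I do not expect a serious obstacle here. The one point worth care is that $h'$ may vanish somewhere in $\ID$, which would be a problem if one tried to work with the quotient $g'/h'$; the argument above avoids this by using the hypothesis only in the integrated form $\int|g'|^2\le k^2\int|h'|^2$. (Alternatively, one could note that $g'/h'$ extends across the zeros of $h'$ to an analytic self-map of $\ID$ bounded by $k$, but this is not needed.) The only genuinely substantive step is the integrating-factor implication that $\bigl(xF'(x)\bigr)'\ge0$ together with $xF'(x)|_{x=0}=0$ and $F(0)=0$ imply $F\ge0$ on $[0,1)$, which is what upgrades the derivative inequality to the stated one.
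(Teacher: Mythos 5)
Your proof is correct: the circle-average (Parseval) step and the double integration that removes the $n^2$ factors (via $\bigl(xF'(x)\bigr)'\ge 0$, $xF'(x)\big|_{x=0}=0$, $F(0)=0$) are all justified, since $\sum|a_n|^2x^n$ and $\sum|b_n|^2x^n$ have radius of convergence at least $1$. The paper itself gives no proof of this lemma—it only cites \cite[Lemma 2.1]{Kayumov2}—and your argument is essentially the standard one from that source (integrate the pointwise inequality over circles, then integrate the resulting coefficient inequality twice), so there is nothing further to reconcile.
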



\section{Main results and their proofs}\label{sec3-main}

Before we state and prove our main theorems, it is worth pointing out that our approach provides many results by
different choices of $\varphi$ in the main theorems. To demonstrate this, we first present a set of test functions
for which our results are applicable.

\begin{examples}
\begin{enumerate}
\item[\bf (a)] For $\lambda \in (0,1]$ and $\alpha \in \IC\backslash\{0\}$, consider
 $$\varphi(z)=
\frac{\alpha z}{(1+z)(1+\lambda z)}, \quad z\in\ID.
$$
Then it is easy to see that $\dist (\varphi(0),\partial\varphi(\ID))=|\alpha |/(2(1+\lambda))$, because
$$\frac{\alpha }{\varphi(z)}=\frac{1}{z} +\lambda z +(1+\lambda)
$$
and for $\lambda \in (0,1)$, $w= \frac{1}{z} +\lambda z$ maps $\ID$ onto the exterior of the ellipse bounded by
$$\frac{U^2}{(1+\lambda)^2} +\frac{V^2}{(1-\lambda)^2}=1.
$$
Also, we see that $\varphi$ is univalent in $\ID$.

\item[\bf (b)] For $\lambda \in [0,1)$,  consider the univalent function
 $$\varphi(z)=
\frac{z}{1-2\lambda z +z^2}= \frac{1}{\frac{1}{z}+z-2\lambda } , \quad z\in\ID.
$$
Then it can be easily seen that  $\varphi(z)$ maps $\ID$ onto the complement of segment $(-1/(2(1+\lambda)),1/(2(1-\lambda)))$
and thus,
$$\dist (\varphi(0),\partial\varphi(\ID))=1/(2(1+\lambda)).
$$

\item[\bf (c)]
For $a\in {\mathbb R}\backslash \{0\}$ and $c>0$, consider
$$\varphi(z)=a\left (\frac{1+z}{1-z}\right ) +2(\sqrt{c^2+a^2} \, -a)\frac{z}{1-z^2} , \quad z\in\ID.
$$
Using the range of the function $\psi (z)=2cz/(1-z^2)$, it can be easily shown that $\varphi(z)$ maps $\ID$ onto the
complex plane with slits along half-lines ${\rm Re}\, w =0$ and $|{\rm Im}\, w|\geq c:=c(a)$ such that $\varphi(0)=a$
and
$$\dist (\varphi(0),\partial\varphi(\ID))= \dist (a,\partial\varphi(\ID))=\sqrt{c^2+a^2}.
$$
Obviously, $\varphi$ is univalent and starlike in $\ID$.

In particular, for $a>0$ or $-n/2<a<0$, set $\sqrt{c^2+a^2} \, -a=n$, where $n>0$. Then we obtain that the function
$$\varphi_{a,n}(z)=\frac{a(1+z)}{1-z} +\frac{2nz}{1-z^2}
$$
maps $\ID$ onto the complex plane with slits along half-lines ${\rm Re}\, w =0$ and $|{\rm Im}\, w|\geq c:=c(a,n)=\sqrt{n(n+2a)}$ such that $\varphi(0)=a$
and
$$\dist (\varphi(0),\partial\varphi(\ID))= n+a.
$$
\item[\bf (d)]  For $\lambda \in \IC$, consider the function
$$\varphi(z)=\frac{z-\lambda z^2}{(1-z)^2}.
$$
We see that it is univalent in $\ID$ if and only if $|2\lambda -1|\leq 1$. The function $\varphi(z)$ in general is not starlike, for example,
for $\lambda =1/2$, this function is known to be close-to-convex (univalent) but is not starlike in $\ID$.

In particular, for $\lambda \in [0,1]$, we may write
$\varphi$ as
$$\varphi(z)=\frac{1}{4(1-\lambda)}(\zeta ^2-1),
$$
where $\zeta =\psi (z)=\frac{1+(1-2\lambda) z}{1-z}$ and $w =\Psi (\zeta )=\zeta^2.$
It follows that $\zeta =\psi (z) $ maps the disk $\ID$ onto the half-plane ${\rm Re}\, w>\lambda$ and $ w =\Psi (\zeta)$ maps the half-plane ${\rm Re}\, w>\lambda$ onto the  parabolic region
$$x<\lambda ^2 -\frac{y^2}{4\lambda ^2}.
$$
Consequently, for $\lambda \in [0,1]$, $\varphi$ maps $\ID$ onto a parabolic region such that
$$\dist (\varphi(0),\partial\varphi(\ID))= -\frac{1-\lambda ^2}{4(1-\lambda)}=-\frac{1+\lambda }{4}.
$$

\item[\bf (e)] For $|\lambda| <1$, consider
$$\varphi(z)=\lambda+\frac{2}{\pi}\log \left (\frac{1+z\xi}{1-z} \right ), \quad z\in\ID,
$$
where $\xi=e^{-i\pi{\rm Im}\,\lambda}$. Then it is a simple exercise to show that
$\varphi(z)$ maps $\ID$ onto the strip $\Omega =\{w:\,|{\rm Im}\, w|<1\}$ with $\varphi(0)=\lambda$
and $\varphi '(0)=\frac{2}{\pi}(1+\xi)$ such that
$$\dist (\varphi(0),\partial\varphi(\ID))=1-|{\rm Im}\,\lambda|.
$$
Note that $\varphi$ is convex.

\item[\bf (f)] For ${\rm Re}\,\lambda >0$, consider
$$\varphi(z)=\frac{\lambda + \overline{\lambda}z}{1-z}, \quad z\in\ID.
$$
Then we see that $\varphi(z)$ maps $\ID$ onto the right half-plane ${\rm Re}\, w >0$ such that $\varphi(0)=\lambda$ and
$$\dist (\varphi(0),\partial\varphi(\ID))={\rm Re}\,\lambda.
$$
Clearly, $\varphi$ is convex.

\item[\bf (g)] For $\alpha \in [1,2]$, the function
$$\varphi(z)=\frac{1}{2\alpha}\left [\left (\frac{1+z}{1-z} \right )^{\alpha}-1\right ]
$$
is univalent in $\ID$ and
$$\dist (\varphi(0),\partial\varphi(\ID))=\frac{1}{2\alpha}.
$$
\item[\bf (h)] For $\alpha \in [0,1)$, the function
$$\varphi(z)=\frac{z}{(1-z)^{2(1-\alpha)}}
$$
is univalent (and is in fact starlike of order $\alpha$) in $\ID$ and
$$\dist (\varphi(0),\partial\varphi(\ID))=\frac{1}{2^{2(1-\alpha)}}.
$$
\end{enumerate}
\end{examples}
The following result is a generalization of \cite[Theorems 1.1 and 1.3]{Kayumov2} (see also Theorem \ref{KayPon8-Additional4}) for appropriate choices of $\varphi$.

\begin{theorem}\label{thQHC1}
Suppose that $f(z)=h(z)+\overline{g(z)}=\sum_{n=0}^{\infty}a_n z^n+\overline{\sum_{n=1}^{\infty}b_n z^n}$ is a  $K$-quasiconformal
 sense-preserving  harmonic mapping in $\ID$ and $h\prec \varphi$, where $\varphi$ is univalent and convex in $\ID$. Then
\begin{equation*}
\sum_{n=1}^{\infty}(|a_n|+|b_n|)r^n\leq \dist (\varphi(0),\partial\varphi(\ID)) ~\mbox{ for $\ds |z|=r\leq \frac{K+1}{5K+1}$.}
\end{equation*}
The result is sharp.
\end{theorem}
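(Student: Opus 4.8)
The plan is to split $\sum_{n\ge 1}(|a_n|+|b_n|)r^n$ into its analytic and co-analytic parts, bound the first using the convex subordination estimate of Lemma~\ref{lemCd} and the second using the quasiconformality inequality of Lemma~\ref{lem:Kayumov2} combined with Cauchy--Schwarz, and then optimize in $r$. Throughout write $d:=\dist(\varphi(0),\partial\varphi(\ID))$ and $k:=\frac{K-1}{K+1}\in[0,1)$, so that sense-preservation together with $K$-quasiconformality gives $|g'(z)|\le k|h'(z)|$ in $\ID$.

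Since $h\prec\varphi$ with $\varphi$ convex univalent, Lemma~\ref{lemCd} gives $|a_n|\le|\varphi'(0)|\le 2d$ for all $n\ge1$, hence
\[
\sum_{n=1}^{\infty}|a_n|r^n\le \frac{2dr}{1-r}
\quad\text{and}\quad
\sum_{n=1}^{\infty}|a_n|^2r^n\le 4d^2\sum_{n=1}^{\infty}r^n=\frac{4d^2r}{1-r}.
\]
For the co-analytic part, Lemma~\ref{lem:Kayumov2} applied to $h$ and $g$ yields $\sum_{n\ge1}|b_n|^2r^n\le k^2\sum_{n\ge1}|a_n|^2r^n$, so by Cauchy--Schwarz
\[
\sum_{n=1}^{\infty}|b_n|r^n\le\Bigl(\sum_{n=1}^{\infty}|b_n|^2r^n\Bigr)^{1/2}\Bigl(\sum_{n=1}^{\infty}r^n\Bigr)^{1/2}
\le k\Bigl(\frac{4d^2r}{1-r}\Bigr)^{1/2}\Bigl(\frac{r}{1-r}\Bigr)^{1/2}=\frac{2kdr}{1-r}.
\]
Adding the two estimates gives $\sum_{n\ge1}(|a_n|+|b_n|)r^n\le \frac{2d(1+k)r}{1-r}$, and the right-hand side is $\le d$ precisely when $2(1+k)r\le 1-r$, i.e.\ $r\le\frac{1}{3+2k}$. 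Substituting $k=\frac{K-1}{K+1}$ gives $3+2k=\frac{5K+1}{K+1}$, so $\frac{1}{3+2k}=\frac{K+1}{5K+1}$, which is the asserted radius.

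For sharpness I would test the pair $\varphi(z)=h(z)=\frac{1+z}{1-z}$ (convex, with $d=1$) and $g(z)=k\bigl(\frac{1+z}{1-z}-1\bigr)=\frac{2kz}{1-z}$: then $h\prec\varphi$ trivially, $\omega=g'/h'\equiv k$ so $f=h+\overline{g}$ is sense-preserving and $K$-quasiconformal, $|a_n|=2$ and $|b_n|=2k$ for all $n\ge1$, and $\sum_{n\ge1}(|a_n|+|b_n|)r^n=\frac{2(1+k)r}{1-r}$, which equals $1=d$ exactly at $r=\frac{K+1}{5K+1}$ and exceeds $d$ for any larger $r$. The analytic content is carried entirely by the two cited lemmas, so I do not anticipate a genuine obstacle in the inequality chain; the points needing care are checking that the Cauchy--Schwarz step is not wasteful (which the extremal example confirms, since all intermediate inequalities become equalities there) and the routine bookkeeping that converts $r\le\frac{1}{3+2k}$ into the stated rational function of $K$ while verifying that the candidate extremal function meets every hypothesis.
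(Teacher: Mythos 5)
Your proposal is correct and follows essentially the same route as the paper: Lemma~\ref{lemCd} for the analytic part, Lemma~\ref{lem:Kayumov2} combined with Cauchy--Schwarz for the co-analytic part, giving $\sum_{n\ge1}(|a_n|+|b_n|)r^n\le 2(1+k)\,\dist(\varphi(0),\partial\varphi(\ID))\,\frac{r}{1-r}$ and hence the radius $\frac{1}{3+2k}=\frac{K+1}{5K+1}$. The only difference is cosmetic: for sharpness you take the half-plane map $\varphi(z)=h(z)=\frac{1+z}{1-z}$ with constant dilatation $\omega\equiv k$ (admissible, since the definition only requires $|g'/h'|\le k<1$, so equality is reached directly at $r=\frac{K+1}{5K+1}$), whereas the paper uses $h(z)=\frac{1}{1-z}$ with dilatation $k\lambda$, $\lambda\in\ID$, and lets $|\lambda|\to1$; both arguments establish that the constant cannot be improved.
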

\begin{proof}
By assumption $h\prec \varphi$ and $\varphi (\ID)$ is a convex domain. Then, by Lemma \ref{lemCd}, we have
\begin{equation*}
  |a_n|\leq 2 \,\dist (\varphi(0),\partial \varphi(\ID)).
\end{equation*}
Consequently,
\begin{equation*}
\sum_{n=1}^{\infty}|a_n|r^n\leq 2 \,\dist (\varphi(0),\partial \varphi(\ID))\sum_{n=1}^{\infty}r^n
\leq 2 \,\dist (\varphi(0),\partial \varphi(\ID))\frac{r}{1-r}.
\end{equation*}
Because $f=h+\overline{g}$ is a $K$-quasiconformal sense-preserving harmonic mapping so that $|g'(z)|\leq k |h'(z)|$ in $\ID$,
where $0\leq k<1$, by Lemma \ref{lem:Kayumov2} and the Cauchy-Schwarz inequality, it follows that
\begin{eqnarray*}
\sum_{n=1}^{\infty}|b_{n}|r^n
&\leq&\sqrt{\sum_{n=1}^{\infty}|b_{n}|^2r^n}\sqrt{\sum_{n=1}^{\infty}r^n}\\
&\leq &k\sqrt{\sum_{n=1}^{\infty}|a_{n}|^2r^n}\sqrt{\sum_{n=1}^{\infty}r^n}\\
&\leq &2 k \,\dist (\varphi(0),\partial \varphi(\ID))\frac{r}{1-r}.
\end{eqnarray*}
Thus, we have
\begin{equation*}
\sum_{n=1}^{\infty}(|a_n|+|b_n|)r^n\leq 2(1+k)\,\dist (\varphi(0),\partial\varphi(\ID))\frac{r}{1-r}
\end{equation*}
which is less than or equal to $\dist (\varphi(0),\partial \varphi(\ID))$
for $r\leq\frac{1}{3+2k}$. Substituting $k=\frac{K-1}{K+1}$ gives the desired result.

In order to prove the sharpness, we consider
$$\varphi(z)=h(z)=\frac{1}{1-z}=\sum_{n=0}^{\infty}z^n,$$
and $g'(z)=k\lambda h'(z)$, where $\lambda\in\mathbb{D}$. Then it is easy to see that
$$\dist (\varphi(0),\partial \varphi(\ID))=\frac{1}{2}$$
and
$$g(z)=k\lambda \frac{z}{1-z}=k\lambda\sum_{n=1}^{\infty} z^n.$$
So it is a simple exercise to yield
\begin{equation*}
\sum_{n=1}^{\infty}(|a_n|+|b_n|)r^n=\sum_{n=1}^{\infty}(1+k|\lambda|)r^n =(1+k|\lambda|)\frac{r}{1-r}
\end{equation*}
which is bigger than or equal to $1/2$ if and only if
$$r\geq \frac{1}{3+2k|\lambda|}= \frac{K+1}{3K+3+2|\lambda|(K-1)}.
$$
This shows that the number $\frac{K+1}{5K+1}$ cannot be improved since $|\lambda|$ could be chosen so close to $1$ from left.
This completes the proof.
\end{proof}

Also, it is interesting to note
that when $k = 0$ (or, equivalently, $K = 1$) one retrieves Aizenberg's  \cite{Aiz07} result,
according to which for convex functions $\varphi$, the Bohr inequality \eqref{sub} holds with  $1/3$ as its Bohr radius.
Because of its independent interest, it might be worth stating the following two corollaries.

\begin{corollary}\label{thHC1a}
Suppose that $f(z)=h(z)+\overline{g(z)}=\sum_{n=0}^{\infty}a_n z^n+\overline{\sum_{n=1}^{\infty}b_n z^n}$ is a
sense-preserving harmonic mapping in $\ID$ and $h\prec \varphi$, where $\varphi(z)$ is univalent and convex in $\mathbb{D}$. Then
\begin{equation*}
\sum_{n=1}^{\infty}(|a_n|+|b_n|)r^n\leq \dist (\varphi(0),\partial\varphi(\ID))
\end{equation*}
for $|z|=r\leq 1/5$. The number $1/5$ is sharp.
\end{corollary}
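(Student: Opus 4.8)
The plan is to reproduce the proof of Theorem~\ref{thQHC1} verbatim, with the quasiconformality constant $k$ replaced by $1$. The only structural difference is that here $f$ is merely sense-preserving; but sense-preservation still forces the dilatation $\omega=g'/h'$ to map $\ID$ into $\ID$, so that $|g'(z)|\le|h'(z)|$ throughout $\ID$. Hence Lemma~\ref{lem:Kayumov2} applies with $k=1$ and gives $\sum_{n=1}^{\infty}|b_n|^2r^n\le\sum_{n=1}^{\infty}|a_n|^2r^n$ for $|z|=r<1$.

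First I would invoke Lemma~\ref{lemCd}: since $h\prec\varphi$ with $\varphi(\ID)$ convex, $|a_n|\le 2\,\dist(\varphi(0),\partial\varphi(\ID))=:2d$ for all $n\ge 1$, whence $\sum_{n=1}^{\infty}|a_n|r^n\le 2d\,\tfrac{r}{1-r}$. Next, by the Cauchy--Schwarz inequality,
\[
\sum_{n=1}^{\infty}|b_n|r^n\le\Big(\sum_{n=1}^{\infty}|b_n|^2r^n\Big)^{1/2}\Big(\sum_{n=1}^{\infty}r^n\Big)^{1/2}\le 2d\,\frac{r}{1-r},
\]
using $\sum_{n=1}^{\infty}|b_n|^2r^n\le\sum_{n=1}^{\infty}|a_n|^2r^n\le 4d^2\tfrac{r}{1-r}$ and $\sum_{n=1}^{\infty}r^n=\tfrac{r}{1-r}$. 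Adding the two estimates gives $\sum_{n=1}^{\infty}(|a_n|+|b_n|)r^n\le 4d\,\tfrac{r}{1-r}$, and the right-hand side is $\le d$ exactly when $4r\le 1-r$, i.e.\ $r\le 1/5$.

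For the sharpness I would take over the extremal configuration from Theorem~\ref{thQHC1}: let $\varphi(z)=h(z)=1/(1-z)$, so that $d=\dist(\varphi(0),\partial\varphi(\ID))=1/2$, and let $g'(z)=\lambda h'(z)$ with $\lambda\in\ID$, which keeps $f$ sense-preserving and gives $g(z)=\lambda z/(1-z)$. Then $\sum_{n=1}^{\infty}(|a_n|+|b_n|)r^n=(1+|\lambda|)\tfrac{r}{1-r}$, and this exceeds $1/2$ precisely when $r\ge\tfrac{1}{3+2|\lambda|}$; since $\tfrac{1}{3+2|\lambda|}\to\tfrac15$ as $|\lambda|\to 1^-$, the radius $1/5$ cannot be enlarged.

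I do not anticipate a genuine obstacle: this corollary is simply the limiting case $K\to\infty$ of Theorem~\ref{thQHC1}, and the only point deserving a word is the use of the endpoint $k=1$, which is legitimate because Lemma~\ref{lem:Kayumov2} is stated for $k\in[0,1]$ and because sense-preservation already supplies $|g'(z)|\le|h'(z)|$ in $\ID$. If one wished to avoid the endpoint altogether, one could instead apply Theorem~\ref{thQHC1} to the $K$-quasiconformal maps $h+\overline{tg}$ with $0<t<1$ (for which the radius $(K+1)/(5K+1)$ equals $1/(3+2t)$) and then let $t\to 1^-$ and $r\to(1/5)^-$; but the direct computation above is shorter.
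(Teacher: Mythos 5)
Your proposal is correct and is essentially the paper's own argument: the paper proves this corollary precisely by letting $k=1$ (equivalently $K\to\infty$) in the proof of Theorem~\ref{thQHC1}, using that sense-preservation gives $|g'(z)|<|h'(z)|$ in $\ID$, and the sharpness example $\varphi=h=1/(1-z)$, $g'=\lambda h'$ with $|\lambda|\to 1^-$ is the same extremal configuration used there.
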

\begin{proof}
Allow $k=1$ in the proof of Theorem \ref{thQHC1}. Indeed, since $f(z)$ is locally univalent and sense-preserving in $\ID$,
we have $|g'(z)| < |h'(z)|$ in $\ID$ and thus, we can allow $K\rightarrow \infty$ to obtain the desired conclusion.
\end{proof}

If we choose $\varphi(z)=(\alpha -z)/(1-\overline{\alpha}z)$ with $|\alpha|<1$, then $\varphi(0)=\alpha$
and $\dist (\varphi (0),\partial \Omega  )=1-|\alpha|$  and this clearly give the following corollary
(see also \cite{Kayumov2} or Theorem \ref{KSP4-th3} with $K\rightarrow \infty$).

\begin{corollary}
Suppose that $f(z) = h(z)+\overline{g(z)}=\sum_{n=0}^\infty a_n z^n+\overline{\sum_{n=1}^\infty b_n z^n}$ is a
sense-preserving harmonic mapping of the disk $\ID$, where $|h(z)|< 1$ in $\ID$. Then the following sharp inequality hold:
$$\ds |a_0|+\sum_{n=1}^\infty (|a_n|+ |b_n|) r^n \leq 1~\mbox{ for  }~ r \leq \frac{1}{5}.
$$
\end{corollary}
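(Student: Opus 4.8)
The plan is to obtain this statement as an immediate specialization of Corollary~\ref{thHC1a}. Given a sense-preserving harmonic mapping $f=h+\overline{g}$ in $\ID$ with $|h(z)|<1$, set $\alpha=a_0=h(0)$, so that $|\alpha|<1$, and take $\varphi(z)=(\alpha-z)/(1-\overline{\alpha}z)$. This $\varphi$ is a M\"obius automorphism of $\ID$, hence univalent, and it maps $\ID$ onto the convex domain $\ID$; since $\varphi(0)=\alpha=h(0)$ and $h(\ID)\subset\ID=\varphi(\ID)$, the characterization of subordination to a univalent function gives $h\prec\varphi$. Because $\dist(\varphi(0),\partial\varphi(\ID))=\dist(\alpha,\partial\ID)=1-|\alpha|=1-|a_0|$, Corollary~\ref{thHC1a} yields
\begin{equation*}
\sum_{n=1}^{\infty}(|a_n|+|b_n|)r^n\leq 1-|a_0| ~\mbox{ for }~ r\leq \frac{1}{5},
\end{equation*}
and transposing $|a_0|$ to the left-hand side gives the asserted inequality.

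For the sharpness of the constant $1/5$, I would write down near-extremal mappings explicitly. Let $h(z)=(a-z)/(1-az)$ with $a\in(0,1)$, and put $g(z)=k\lambda\bigl(h(z)-a\bigr)$ with $\lambda\in\ID$ and $0\leq k<1$; then the dilatation $\omega=g'/h'=k\lambda$ is a constant of modulus $k|\lambda|<1$, so $f=h+\overline{g}$ is a sense-preserving harmonic mapping in $\ID$. A short computation with $a_0=a$, $a_n=-a^{n-1}(1-a^2)$ and $b_n=k\lambda a_n$ for $n\geq 1$ gives
\begin{equation*}
|a_0|+\sum_{n=1}^{\infty}(|a_n|+|b_n|)r^n = a+(1+k|\lambda|)(1-a^2)\frac{r}{1-ar},
\end{equation*}
and one checks that the right-hand side exceeds $1$ exactly when $r>\bigl((1+k|\lambda|)(1+a)+a\bigr)^{-1}$. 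Letting $a\to 1^{-}$, $|\lambda|\to 1^{-}$ and $k\to 1^{-}$ simultaneously, this threshold tends to $1/5$, which shows that no radius larger than $1/5$ is admissible.

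I do not expect a genuine obstacle here, since the analytic content all sits in Corollary~\ref{thHC1a} and the sharpness argument is a single explicit family of mappings. The only two points that call for a little care are recognizing $\varphi$ as an automorphism of $\ID$, which makes $h\prec\varphi$ automatic from $|h|<1$, and, in the sharpness step, keeping the sense-preserving constraint $k|\lambda|<1$ while letting all the parameters approach their extreme values.
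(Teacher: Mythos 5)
Your proposal is correct and follows essentially the same route as the paper: the inequality is obtained by specializing Corollary \ref{thHC1a} to the disk automorphism $\varphi(z)=(\alpha-z)/(1-\overline{\alpha}z)$ with $\alpha=a_0$, exactly as the paper does. The only difference is that you verify sharpness of $1/5$ by an explicit family $h(z)=(a-z)/(1-az)$, $g'=k\lambda h'$ with $a,\,k|\lambda|\to 1$, which is a correct computation, whereas the paper simply refers to Theorem \ref{KSP4-th3} with $K\to\infty$ for this point.
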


%

\begin{theorem}\label{thHC}
Suppose that $f(z)=h(z)+\overline{g(z)}=\sum_{n=0}^{\infty}a_n z^n+\overline{\sum_{n=2}^{\infty}b_n z^n}$ is a
$K$-quasiconformal sense-preserving harmonic mapping in $\ID$ and $h\prec \varphi$, where $\varphi$ is univalent and convex in $\ID$. Then
\begin{equation*}
\sum_{n=1}^{\infty}|a_n|r^n+\sum_{n=2}^{\infty}|b_n|r^n\leq \dist (\varphi(0),\partial\varphi(\ID))
\end{equation*}
for $|z|=r\leq r_c(k)$, where $r_c(k)$ is the positive root of the equation
\begin{equation}\label{eq:r}
\frac{r}{1-r}+\frac{kr^2}{1-r^2}\sqrt{\left(\frac{1+r^2}{1-r^2}\right)\left(\frac{\pi^2}{6}-1\right)}
=\frac{1}{2}
\end{equation}
and $k=(K-1)/(K+1)$. The number $r_c(k)$ cannot be replaced by the number greater than $R:=R(k)$, where $R$ is the positive root of the equation
\begin{equation}\label{eq:R}
\frac{2(1+k)R}{1-R}+2k\log(1-R)=1.
\end{equation}
\end{theorem}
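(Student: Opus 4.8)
Set $d:=\dist(\varphi(0),\partial\varphi(\ID))$ and $k:=(K-1)/(K+1)\in[0,1)$. The plan is to bound the two sums $\sum_{n\ge1}|a_n|r^n$ and $\sum_{n\ge2}|b_n|r^n$ separately and then choose $r$ so that the total does not exceed $d$. For the analytic part, since $h\prec\varphi$ with $\varphi$ univalent and convex, Lemma \ref{lemCd} gives $|a_n|\le|\varphi'(0)|\le 2d$ for every $n\ge1$, whence
\[
\sum_{n=1}^{\infty}|a_n|r^n\le 2d\sum_{n=1}^{\infty}r^n=2d\,\frac{r}{1-r}.
\]

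The co-analytic part is where the hypothesis $b_1=0$ enters. Because $f$ is sense-preserving and locally univalent, $J_f=|h'|^2-|g'|^2>0$ in $\ID$, so $h'$ is zero-free and the dilatation $\omega:=g'/h'$ is analytic in $\ID$ with $|\omega(z)|\le k$; since $g$ has no linear term, $\omega(0)=g'(0)/h'(0)=0$, so by the Schwarz lemma $|\omega(z)|\le k|z|$, i.e.\ $|g'(z)|\le k|z|\,|h'(z)|$ throughout $\ID$. I would then integrate $|g'(re^{i\theta})|^2\le k^2r^2|h'(re^{i\theta})|^2$ over $|z|=r<1$; Parseval's identity, together with $b_1=0$ and $|a_n|\le 2d$, yields
\[
\sum_{n=2}^{\infty}n^2|b_n|^2r^{2n-2}\le k^2r^2\sum_{n=1}^{\infty}n^2|a_n|^2r^{2n-2}\le 4k^2d^2r^2\,\frac{1+r^2}{(1-r^2)^3}.
\]
Applying Cauchy--Schwarz in the form $\sum_{n\ge2}|b_n|r^n=\sum_{n\ge2}\bigl(n|b_n|r^{n-1}\bigr)(r/n)$ and using $\sum_{n\ge2}n^{-2}=\pi^2/6-1$ then gives
\[
\sum_{n=2}^{\infty}|b_n|r^n\le\Bigl(\sum_{n=2}^{\infty}n^2|b_n|^2r^{2n-2}\Bigr)^{1/2}\Bigl(r^2\sum_{n=2}^{\infty}\tfrac1{n^2}\Bigr)^{1/2}\le 2kd\,\frac{r^2}{1-r^2}\sqrt{\Bigl(\frac{1+r^2}{1-r^2}\Bigr)\Bigl(\frac{\pi^2}{6}-1\Bigr)}.
\]
The weighting $r/n$ here is essentially forced: it is the choice that makes the second Cauchy--Schwarz factor collapse to the numerical constant $\pi^2/6-1$ appearing in \eqref{eq:r}.

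Adding the two estimates, the left-hand side of the theorem is at most $2d$ times the left-hand side of \eqref{eq:r}; that expression vanishes at $r=0$, is strictly increasing on $[0,1)$ and tends to $\infty$ as $r\to1^-$, so it equals $\tfrac12$ at a unique $r_c(k)$ and is $\le\tfrac12$ exactly for $r\le r_c(k)$, which proves the inequality. For sharpness I would take $\varphi(z)=h(z)=1/(1-z)$ — convex, with $d=\tfrac12$ — and, for $\lambda\in\ID$, let $g'(z)=k\lambda z\,h'(z)$, so $g(z)=k\lambda\bigl(\tfrac{z}{1-z}+\log(1-z)\bigr)$ has $g(0)=0$, no linear term, and dilatation $g'/h'=k\lambda z$ of modulus $<k$; thus $f=h+\overline g$ is an admissible $K$-quasiconformal sense-preserving harmonic mapping with $h\prec\varphi$. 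A short computation gives $\sum_{n\ge1}|a_n|r^n=\tfrac{r}{1-r}$ and $\sum_{n\ge2}|b_n|r^n=k|\lambda|\bigl(\tfrac{r}{1-r}+\log(1-r)\bigr)$, so the left-hand side equals $(1+k|\lambda|)\tfrac{r}{1-r}+k|\lambda|\log(1-r)$. Letting $|\lambda|\to1^-$, and noting $r\mapsto(1+k)\tfrac{r}{1-r}+k\log(1-r)$ is increasing (its derivative is $(1+kr)/(1-r)^2>0$), this exceeds $\tfrac12=d$ as soon as $\tfrac{2(1+k)r}{1-r}+2k\log(1-r)>1$, i.e.\ as soon as $r>R(k)$, which is \eqref{eq:R}.

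The step I expect to be delicate — more conceptually than computationally — is recognizing that $b_1=0$, combined with sense-preservation (which forces $a_1=h'(0)\ne0$ and hence $\omega(0)=0$), upgrades the crude bound $|g'(z)|\le k|h'(z)|$ to $|g'(z)|\le k|z|\,|h'(z)|$: the extra factor $|z|$ is precisely what produces the improvement over Theorem \ref{thQHC1}. After that the argument is routine, the only genuine choice being the weighting in the Cauchy--Schwarz step, which is dictated by the form of \eqref{eq:r}; a secondary, mechanical point to verify is that the extremal $g$ above truly has no constant term and that the corresponding $f$ lies in the prescribed class.
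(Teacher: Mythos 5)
Your proposal is correct and follows essentially the same route as the paper's proof: the convexity bound $|a_n|\le|\varphi'(0)|\le 2\,\dist(\varphi(0),\partial\varphi(\ID))$ for the analytic part, the Schwarz-lemma upgrade $|g'(z)|\le k|z||h'(z)|$ coming from $b_1=0$, Parseval plus the weighted Cauchy--Schwarz step with $\sum_{n\ge2}n^{-2}=\pi^2/6-1$, and the same extremal pair $\varphi(z)=h(z)=1/(1-z)$, $g'(z)=kzh'(z)$ (your extra parameter $\lambda$ and the monotonicity check are harmless refinements). No gaps; nothing further is needed.
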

\begin{proof} As $\varphi(z)$ is analytic and convex in $\ID$, by \eqref{GTHMC} and Lemma~\ref{lemCd}, we have
$$\dist (\varphi(0),\partial\varphi(\ID))\geq \frac{|\varphi'(0)|}{2}\quad{\rm and}\quad |a_n|\leq |\varphi'(0)| ~\mbox{ for $n\geq 1$}
$$
so that
\begin{equation}\label{An}
\sum_{n=1}^{\infty}|a_n|r^n\leq |\varphi'(0)| \sum_{n=1}^{\infty}r^n=|\varphi'(0)| \frac{ r}{1-r}.
\end{equation}

Because $f=h+\overline{g}$ is locally univalent and  $K$-quasiconformal sense-preserving harmonic mapping with $g'(0)=0$,
Schwarz's lemma gives that $\omega=g'/h'$ is analytic in $\mathbb{D}$ and $|\omega(z)|\leq k|z|$ in $\mathbb{D}$. Thus, we have
$$|g'(z)|^2=|\omega(z) h'(z)|^2\leq k^2|z h'(z)|^2.
$$
Integrate this inequality on the circle $|z|=r$, we obtain
$$\sum_{n=2}^{\infty}n^2 |b_n|^2 r^{2(n-1)}\leq k^2r^2\sum_{n=1}^{\infty}n^2 |a_n|^2 r^{2(n-1)}
\leq k^2 |\varphi'(0)|^2 \frac{r^2(1+r^2)}{(1-r^2)^3}.
$$
By using the Cauchy-Schwarz inequality, it follows that
\begin{equation*}
\sum_{n=2}^{\infty}|b_n|r^n\leq \sqrt{\sum_{n=2}^{\infty}n^2 |b_n|^2 r^{2n}}\sqrt{\sum_{n=2}^{\infty}\frac{1}{n^2}}
\leq k |\varphi'(0)| r^2\sqrt{\frac{1+r^2}{(1-r^2)^3}}\sqrt{\frac{\pi^2}{6}-1}.
\end{equation*}
Consequently, by combining \eqref{An} with the last inequality, and \eqref{Bohr-eq1x}, we find that
\begin{equation*}
\begin{split}
\sum_{n=1}^{\infty}|a_n|r^n+\sum_{n=2}^{\infty}|b_n|r^n
&\leq  \left ( \frac{r}{1-r}+kr^2\sqrt{\frac{1+r^2}{(1-r^2)^3}}\sqrt{\frac{\pi^2}{6}-1}\right )|\varphi'(0)|\\
& \leq  2\left ( \frac{r}{1-r}+\frac{kr^2}{1-r^2}\sqrt{\left(\frac{1+r^2}{1-r^2}\right)\left(\frac{\pi^2}{6}-1\right)}\right )\dist (\varphi(0),\partial\varphi(\ID))\\
& \leq  \dist (\varphi(0),\partial\varphi(\ID)),
\end{split}
\end{equation*}
where the last inequality holds
if and only if
\begin{equation*}
\begin{split}
\frac{r}{1-r}+\frac{kr^2}{1-r^2}\sqrt{\left(\frac{1+r^2}{1-r^2}\right)\left(\frac{\pi^2}{6}-1\right)}
\leq  \frac{1}{2}.
\end{split}
\end{equation*}
The above inequality holds for $r\leq r_c(k)$, where $r_c(k)$ is the positive root of the equation \eqref{eq:r}.

Finally, we consider the functions
\begin{equation*}
\varphi(z)=h(z)=\frac{1}{1-z}\quad\mbox{and}\quad g'(z)=kz h'(z).
\end{equation*}
Then we find that
$$|a_n|=1\quad \mbox{ for $n\geq 1$ and}\quad |b_n|=\frac{k(n-1)}{n},\quad n\geq 2,
$$
so that
\begin{equation*}
\begin{split}
\sum_{n=1}^{\infty}|a_n|r^n+\sum_{n=2}^{\infty}|b_n|r^n
&=\sum_{n=1}^{\infty} r^n+k\sum_{n=2}^{\infty}\frac{n-1}{n} r^n\\
&=\frac{(1+k)r}{1-r}+k\log(1-r),
\end{split}
\end{equation*}
which is less than or equal to $1/2$ only in the case when  $r\leq R$, where $R=R(k)$ is the positive root of the equation \eqref{eq:R}.
\end{proof}

Setting $k=0$ we see that Theorem \ref{thHC} contains the classical Bohr theorem. The case $k=1$ leads to

\begin{corollary}\label{thHCa}
Suppose that $f(z)=h(z)+\overline{g(z)}=\sum_{n=0}^{\infty}a_n z^n+\overline{\sum_{n=2}^{\infty}b_n z^n}$ is a
sense-preserving harmonic mapping in $\ID$ and $h\prec \varphi$, where $\varphi$ is univalent and convex in $\ID$. Then
\begin{equation*}
\sum_{n=1}^{\infty}|a_n|r^n+\sum_{n=2}^{\infty}|b_n|r^n\leq \dist (\varphi(0),\partial\varphi(\ID))
\end{equation*}
for $|z|=r\leq r_c= 0.294265\cdots$, where $r_c$ is the positive root of the equation
\begin{equation*}
\frac{r}{1-r}+\frac{r^2}{1-r^2}\sqrt{\left(\frac{1+r^2}{1-r^2}\right)\left(\frac{\pi^2}{6}-1\right)}
=\frac{1}{2}.
\end{equation*}
The number $0.294265\cdots$ cannot be replaced by the number greater than $R=0.299823\cdots$, where $R$ is the positive root of the equation
\begin{equation*}
\frac{4 R}{1-R}+2\log(1-R)=1.
\end{equation*}
\end{corollary}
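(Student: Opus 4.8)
The plan is to obtain Corollary~\ref{thHCa} as the boundary case $k=1$ (equivalently $K\to\infty$) of Theorem~\ref{thHC}. The key observation is that $k=1$ is already admissible in the proof of Theorem~\ref{thHC} without any limiting argument: since $f=h+\overline{g}$ is locally univalent and sense-preserving, $\omega=g'/h'$ is analytic and maps $\ID$ into $\ID$, and because here $g(z)=\sum_{n=2}^{\infty}b_n z^n$ we have $g'(0)=0$, so $\omega(0)=0$; Schwarz's lemma then gives $|\omega(z)|\le|z|$, that is $|g'(z)|^2\le|zh'(z)|^2$ in $\ID$ --- precisely the inequality used in the proof of Theorem~\ref{thHC} with $k$ replaced by $1$. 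Every other step there (the bound $|a_n|\le|\varphi'(0)|$ from Lemma~\ref{lemCd}, the estimate $\dist(\varphi(0),\partial\varphi(\ID))\ge|\varphi'(0)|/2$ from \eqref{GTHMC}, the integration over $|z|=r$, and the Cauchy--Schwarz step) is independent of $k$. Hence the conclusion holds verbatim with $k=1$, the admissible radius being the positive root $r_c$ of \eqref{eq:r} specialised to $k=1$, namely
\begin{equation*}
\frac{r}{1-r}+\frac{r^2}{1-r^2}\sqrt{\left(\frac{1+r^2}{1-r^2}\right)\left(\frac{\pi^2}{6}-1\right)}=\frac{1}{2}.
\end{equation*}
(Alternatively one may apply Theorem~\ref{thHC} for each fixed $k\in(0,1)$ and let $k\to1^-$, as is done for the sibling corollaries; the left side of \eqref{eq:r} is increasing in $k$, so $r_c(k)\downarrow r_c(1)$.)

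To see that the statement ``for $r\le r_c$'' is meaningful I would check that the left-hand side of the displayed equation is continuous and strictly increasing on $(0,1)$, tends to $0$ as $r\to0^{+}$ and to $\infty$ as $r\to1^{-}$; hence there is a unique $r_c\in(0,1)$ where it equals $\tfrac12$, and it is $\le\tfrac12$ exactly for $r\le r_c$. Solving this transcendental equation numerically yields $r_c=0.294265\cdots$.

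For sharpness I would use the extremal pair already identified in the proof of Theorem~\ref{thHC}: take $\varphi(z)=h(z)=1/(1-z)$, so that $\varphi(\ID)=\{w:\RE w>1/2\}$ and $\dist(\varphi(0),\partial\varphi(\ID))=1/2$, and let $g'(z)=z\,h'(z)=z/(1-z)^2$. Integrating gives $g(z)=\frac{z}{1-z}+\log(1-z)=\sum_{n=2}^{\infty}\frac{n-1}{n}z^n$, so $|a_n|=1$ for $n\ge1$ and $|b_n|=(n-1)/n$ for $n\ge2$, whence
\begin{equation*}
\sum_{n=1}^{\infty}|a_n|r^n+\sum_{n=2}^{\infty}|b_n|r^n=\frac{r}{1-r}+\sum_{n=2}^{\infty}\frac{n-1}{n}r^n=\frac{2r}{1-r}+\log(1-r),
\end{equation*}
which exceeds $\tfrac12$ precisely when $\frac{4r}{1-r}+2\log(1-r)>1$; thus the radius cannot be replaced by any number greater than the positive root $R$ of \eqref{eq:R} with $k=1$, and solving $\frac{4R}{1-R}+2\log(1-R)=1$ numerically gives $R=0.299823\cdots$.

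There is no genuine obstacle in this argument. The only points requiring a word of care are: (i) confirming that $k=1$ is admissible, which is the Schwarz-lemma remark above and hinges on the hypothesis $b_1=0$; and (ii) the monotonicity and uniqueness of the root $r_c$, needed so that the radius statement is not vacuous. The rest is the routine numerical evaluation of the two equations. I would also point out to the reader that, since $r_c<R$, the exact Bohr radius in this setting remains undetermined --- which is why the corollary is stated in the one-sided form ``cannot be replaced by a number greater than $R$''.
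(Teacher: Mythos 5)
Your argument is correct and follows essentially the same route as the paper, which obtains this corollary simply by taking $k=1$ in Theorem \ref{thHC}; your Schwarz-lemma justification that $k=1$ is admissible (via $g'(0)=0$) and the extremal pair $\varphi=h=1/(1-z)$, $g'(z)=zh'(z)$ are exactly the theorem's proof specialised to $k=1$. The only caveat is your parenthetical alternative of applying Theorem \ref{thHC} for fixed $k<1$ and letting $k\to 1^-$: a given sense-preserving harmonic map need not satisfy $|g'|\leq k|h'|$ for any $k<1$, so that limiting argument does not apply to a fixed $f$, and the direct $k=1$ argument you give first is the correct one.
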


\begin{remark}
Corollary \ref{thHCa} shows that the radius $r_c(k)$ obtained in Theorem  \ref{thHC} is close to the sharp value.
\end{remark}

\begin{theorem}\label{thULH}
Suppose that $f(z)=h(z)+\overline{g(z)}=\sum_{n=0}^{\infty}a_n z^n+\overline{\sum_{n=1}^{\infty}b_n z^n}$ is a
$K$-quasiconformal sense-preserving harmonic mapping in $\ID$ and $h\prec \varphi$, where $\varphi$ is analytic and univalent in $\ID$. Then
\begin{equation*}
\sum_{n=1}^{\infty}(|a_n|+|b_n|)r^n\leq \dist (\varphi(0),\partial\varphi(\ID))
\end{equation*}
for $|z|=r\leq r_u $, where $r_u=r_u(k)$ is the root of the equation
\begin{equation*}
(1-r)^2-4r(1+k\,\sqrt{1+r})=0
\end{equation*}
in the interval $(0,1)$ and $k=(K-1)/(K+1)$.
\end{theorem}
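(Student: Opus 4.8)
The plan is to run the same scheme as in the proof of Theorem~\ref{thQHC1}, but with the convex coefficient estimates of Lemma~\ref{lemCd} replaced by the univalent ones in Lemma~\ref{lemUd}. First, since $h\prec\varphi$ with $\varphi$ univalent, Lemma~\ref{lemUd} gives $|a_n|\le n|\varphi'(0)|\le 4n\,\dist(\varphi(0),\partial\varphi(\ID))$ for every $n\ge 1$, so that
\[
\sum_{n=1}^{\infty}|a_n|r^n\le 4\,\dist(\varphi(0),\partial\varphi(\ID))\sum_{n=1}^{\infty}n r^n=4\,\dist(\varphi(0),\partial\varphi(\ID))\,\frac{r}{(1-r)^2}.
\]

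Next I would treat the co-analytic part. Since $f=h+\overline{g}$ is $K$-quasiconformal and sense-preserving, $|g'(z)|\le k|h'(z)|$ in $\ID$ with $k=(K-1)/(K+1)$, and Lemma~\ref{lem:Kayumov2} together with the bound on $|a_n|$ above and $\sum_{n\ge1}n^2r^n=r(1+r)/(1-r)^3$ gives
\[
\sum_{n=1}^{\infty}|b_n|^2r^n\le k^2\sum_{n=1}^{\infty}|a_n|^2r^n\le 16k^2\,\dist(\varphi(0),\partial\varphi(\ID))^2\,\frac{r(1+r)}{(1-r)^3}.
\]
Applying the Cauchy--Schwarz inequality in the form $|b_n|r^n=(|b_n|r^{n/2})(r^{n/2})$, together with $\sum_{n\ge1}r^n=r/(1-r)$, then yields
\[
\sum_{n=1}^{\infty}|b_n|r^n\le\sqrt{\sum_{n=1}^{\infty}|b_n|^2r^n}\,\sqrt{\sum_{n=1}^{\infty}r^n}\le 4k\,\dist(\varphi(0),\partial\varphi(\ID))\,\frac{r\sqrt{1+r}}{(1-r)^2}.
\]

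Adding the two estimates gives $\sum_{n=1}^{\infty}(|a_n|+|b_n|)r^n\le 4\,\dist(\varphi(0),\partial\varphi(\ID))\,\dfrac{r\bigl(1+k\sqrt{1+r}\bigr)}{(1-r)^2}$, and the right-hand side is at most $\dist(\varphi(0),\partial\varphi(\ID))$ exactly when $(1-r)^2-4r(1+k\sqrt{1+r})\ge 0$. To finish I would note that $r\mapsto(1-r)^2-4r(1+k\sqrt{1+r})$ equals $1$ at $r=0$, is the difference of the strictly decreasing function $(1-r)^2$ and the strictly increasing nonnegative function $4r(1+k\sqrt{1+r})$, hence is strictly decreasing on $(0,1)$, and is negative as $r\to 1^-$; so it has a \emph{unique} zero $r_u=r_u(k)\in(0,1)$, and the asserted inequality holds precisely for $r\le r_u$. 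Substituting $k=(K-1)/(K+1)$ completes the argument.

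I do not anticipate a genuine obstacle: the proof is a routine adaptation of Theorem~\ref{thQHC1}. The only point requiring a little care is the choice of weights in the Cauchy--Schwarz step — pairing $|b_n|r^n$ against $r^{n/2}$ (rather than against $r^n$ or a weight involving $1/n$) is what, combined with Lemma~\ref{lem:Kayumov2} and $|a_n|\le 4n\,\dist(\varphi(0),\partial\varphi(\ID))$, produces the factor $\sqrt{1+r}$ and hence the exact equation $(1-r)^2-4r(1+k\sqrt{1+r})=0$. Since the statement makes no claim of sharpness, no extremal function need be constructed here.
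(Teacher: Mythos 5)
Your proposal is correct and follows essentially the same route as the paper: Lemma~\ref{lemUd} for $|a_n|\le 4n\,\dist(\varphi(0),\partial\varphi(\ID))$, then Lemma~\ref{lem:Kayumov2} plus the Cauchy--Schwarz splitting $\sum|b_n|r^n\le\bigl(\sum|b_n|^2r^n\bigr)^{1/2}\bigl(\sum r^n\bigr)^{1/2}$, leading to the bound $4\,\dist(\varphi(0),\partial\varphi(\ID))\,r(1+k\sqrt{1+r})/(1-r)^2$ and the equation $(1-r)^2-4r(1+k\sqrt{1+r})=0$. Your added monotonicity remark, showing the root in $(0,1)$ is unique, is a small but harmless refinement of the paper's argument.
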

\begin{proof}
By the assumption and Lemma \ref{lemUd}, it follows that $|a_n|\leq 4n \,\dist (\varphi(0),\partial \varphi(\ID))$ and thus,
\begin{equation*}
\sum_{n=1}^{\infty}|a_n|r^n\leq 4 \,\dist (\varphi(0),\partial \varphi(\ID))\sum_{n=1}^{\infty}n r^n
= 4 \,\dist (\varphi(0),\partial \varphi(\ID))\frac{r}{(1-r)^2}.
\end{equation*}
Moreover, because $|g'(z)| \leq k|h'(z)|$ in $\ID$, as in the proof of Theorem \ref{thQHC1},
it follows from Cauchy-Schwarz inequality and Lemma \ref{lem:Kayumov2} with $k=1$ that
\begin{equation*}
\begin{split}
\sum_{n=1}^{\infty}|b_n|r^n
&\leq 4k \,\dist (\varphi(0),\partial \varphi(\ID))\sqrt{\sum_{n=1}^{\infty}n^2 r^n}\sqrt{\sum_{n=1}^{\infty}r^n}\\
&=4k \,\dist (\varphi(0),\partial \varphi(\ID))\sqrt{\frac{r(1+r)}{(1-r)^3}}\sqrt{\frac{r}{1-r}}\\
&=4k \,\dist (\varphi(0),\partial \varphi(\ID))\frac{r\sqrt{1+r}}{(1-r)^2}
\end{split}
\end{equation*}
and thus, we have
\begin{equation*}
\sum_{n=1}^{\infty}(|a_n|+|b_n|)r^n\leq 4 \,\dist (\varphi(0),\partial \varphi(\ID))\frac{r(1+k\,\sqrt{1+r})}{(1-r)^2},
\end{equation*}
which is less than or equal to $\dist (\varphi(0),\partial \varphi(\ID))$ if and only if
\begin{equation*}
\frac{r(1+k\,\sqrt{1+r})}{(1-r)^2}\leq \frac{1}{4}.
\end{equation*}
This gives $|z|=r\leq r_u $, where $r_u=r_u(k)$ is as in the statement.
\end{proof}

\begin{remark}
Theorem \ref{thULH} for $k=0$ reduces to a result of Abu-Muhanna \cite{Abu} with the sharp Bohr radius as $3-2\sqrt{2}=0.17157...$.
\end{remark}

\begin{corollary}\label{thULHa}
Suppose that $f(z)=h(z)+\overline{g(z)}=\sum_{n=0}^{\infty}a_n z^n+\overline{\sum_{n=1}^{\infty}b_n z^n}$ is a
sense-preserving harmonic mapping in $\ID$ and $h\prec \varphi$, where $\varphi$ is analytic and univalent in $\ID$. Then
\begin{equation*}
\sum_{n=1}^{\infty}(|a_n|+|b_n|)r^n\leq \dist (\varphi(0),\partial\varphi(\ID))
\end{equation*}
for $|z|=r\leq r_u= 0.099064\cdots$, where $r_u$ is the root of the equation
\begin{equation*}
(1-r)^2-4r(1+\sqrt{1+r})=0
\end{equation*}
in the interval $(0,1)$.
\end{corollary}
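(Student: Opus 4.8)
The plan is to obtain Corollary~\ref{thULHa} as the limiting case $K\to\infty$ of Theorem~\ref{thULH}, i.e.\ $k=(K-1)/(K+1)\to 1^-$. The key point is that a locally univalent, sense-preserving harmonic mapping $f=h+\overline{g}$ has dilatation $\omega=g'/h'$ which is analytic in $\ID$ with $|\omega(z)|<1$; hence $|g'(z)|<|h'(z)|$ throughout $\ID$, so the hypothesis $|g'(z)|\le k|h'(z)|$ of Theorem~\ref{thULH} is satisfied with $k=1$. In particular Lemma~\ref{lem:Kayumov2} applies with $k=1$ and yields $\sum_{n=1}^{\infty}|b_n|^2r^n\le\sum_{n=1}^{\infty}|a_n|^2r^n$, so every estimate in the proof of Theorem~\ref{thULH} carries over verbatim with $k=1$.

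Carrying this out, I would reproduce that proof with $k=1$: Lemma~\ref{lemUd} gives $|a_n|\le 4n\,\dist(\varphi(0),\partial\varphi(\ID))$ and hence $\sum_{n=1}^{\infty}|a_n|r^n\le 4\,\dist(\varphi(0),\partial\varphi(\ID))\,r/(1-r)^2$, while the Cauchy--Schwarz inequality together with the bound $\sum|b_n|^2r^n\le\sum|a_n|^2r^n$ gives $\sum_{n=1}^{\infty}|b_n|r^n\le 4\,\dist(\varphi(0),\partial\varphi(\ID))\,r\sqrt{1+r}/(1-r)^2$. Adding the two estimates,
\begin{equation*}
\sum_{n=1}^{\infty}(|a_n|+|b_n|)r^n\le 4\,\dist(\varphi(0),\partial\varphi(\ID))\,\frac{r(1+\sqrt{1+r})}{(1-r)^2},
\end{equation*}
and the right-hand side is at most $\dist(\varphi(0),\partial\varphi(\ID))$ precisely when $4r(1+\sqrt{1+r})\le(1-r)^2$, that is, when $p(r):=(1-r)^2-4r(1+\sqrt{1+r})\ge 0$.

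It then remains to locate the threshold. I would note that $p(0)=1>0$, that $p(r)\to-4(1+\sqrt{2})<0$ as $r\to 1^-$, and that $p$ is strictly decreasing on $(0,1)$, since $(1-r)^2$ is decreasing there while $r(1+\sqrt{1+r})$ is increasing. Hence $p$ has a unique zero $r_u$ in $(0,1)$, the inequality $p(r)\ge 0$ holds exactly for $r\le r_u$, and solving $(1-r)^2-4r(1+\sqrt{1+r})=0$ numerically gives $r_u=0.099064\cdots$, as asserted.

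Since the statement is merely a specialization of Theorem~\ref{thULH}, there is essentially no obstacle here; the only points requiring care are the justification that $k=1$ is admissible (handled above via sense-preservingness and Lemma~\ref{lem:Kayumov2} with $k=1$) and the elementary monotonicity argument that isolates the unique root of the stated equation in $(0,1)$. Unlike Theorem~\ref{thQHC1}, no sharpness claim is made here, so no extremal function need be exhibited.
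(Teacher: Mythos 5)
Your proposal is correct and follows essentially the same route as the paper, whose entire proof is to let $k\rightarrow 1$ in Theorem~\ref{thULH}; your additional remarks (that sense-preservingness gives $|g'|<|h'|$ so $k=1$ is admissible in Lemma~\ref{lem:Kayumov2}, and the monotonicity argument locating the unique root $r_u=0.099064\cdots$) merely make explicit what the paper leaves implicit.
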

\begin{proof}
Allow $k\rightarrow 1$ in Theorem \ref{thULH}.
\end{proof}

\begin{remark}
When $\varphi$ in Corollary \ref{thULHa} is univalent and $b_1=0$, then the result can be improved (see also Corollary \ref{thHSa}).
\end{remark}

Our next result is to improve Theorem \ref{thULH} when $g'(0)=b_1=0$.

\begin{theorem}\label{thHS}
Suppose that $f(z)=h(z)+\overline{g(z)}=\sum_{n=1}^{\infty}a_n z^n+\overline{\sum_{n=2}^{\infty}b_n z^n}$ is a
$K$-quasiconformal sense-preserving harmonic mapping in $\ID$ and $h\prec\varphi$, where  $\varphi$ is univalent in $\ID$. Then
\begin{equation*}
\sum_{n=1}^{\infty}|a_n|r^n+\sum_{n=2}^{\infty}|b_n|r^n\leq \dist (0,\partial\varphi(\ID))
\end{equation*}
for $|z|=r\leq r_s $,
where $r_s=r_s(k)$ is the positive real root of the equation
\begin{equation}\label{rootS}
\frac{r}{(1-r)^2}+\frac{kr^2}{(1-r^2)^2}\sqrt{\left(\frac{r^6+11 r^4+11 r^2+1}{1-r^2}\right)\left(\frac{\pi^2}{6}-1\right)}=\frac{1}{4}
\end{equation}
in the interval $(0,1)$ and $k=(K-1)/(K+1)$. The number $r_s(k)$ cannot be replaced by the number greater than $R=R(k)$,
where $R$ is the positive root of the equation
\begin{equation}\label{eq:R1}
\frac{R(1-k+2kR)}{(1-R)^2}-k\log (1-R)=\frac{1}{4}.
\end{equation}
\end{theorem}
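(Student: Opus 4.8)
The plan is to argue as in the proofs of Theorems~\ref{thULH} and~\ref{thHC}, but to use the hypothesis $b_1=0$ exactly as in Theorem~\ref{thHC}: since the co-analytic part $g$ now vanishes to order $2$, the dilatation $\omega=g'/h'$ satisfies $\omega(0)=0$, and Schwarz's lemma upgrades $|g'(z)|\le k|h'(z)|$ to the sharper $|g'(z)|^2\le k^2|z|^2|h'(z)|^2$; the extra factor $|z|^2$ (together with a fourth-power moment) is precisely what produces \eqref{rootS}. First note that $h(0)=a_0=0$, so $h\prec\varphi$ forces $\varphi(0)=0$ and hence $\dist(\varphi(0),\partial\varphi(\ID))=\dist(0,\partial\varphi(\ID))=:d$. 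For the analytic part, $h\prec\varphi$ with $\varphi$ univalent gives $|a_n|\le 4n\,d$ by Lemma~\ref{lemUd}, so that
\[
\sum_{n=1}^{\infty}|a_n|r^n\le 4d\sum_{n=1}^{\infty}nr^n=\frac{4d\,r}{(1-r)^2}.
\]

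Next I would handle the co-analytic part. Because $f$ is locally univalent, sense-preserving and $K$-quasiconformal with $g'(0)=b_1=0$, its dilatation $\omega=g'/h'$ is analytic in $\ID$ with $|\omega|\le k$ and $\omega(0)=0$; Schwarz's lemma then gives $|\omega(z)|\le k|z|$, whence $|g'(z)|^2\le k^2|z|^2|h'(z)|^2$. Integrating over $|z|=r$, using $|a_n|\le 4n\,d$ and the identity $\sum_{n\ge 1}n^4x^{n-1}=(1+11x+11x^2+x^3)/(1-x)^5$ with $x=r^2$, one obtains
\[
\sum_{n=2}^{\infty}n^2|b_n|^2r^{2n}\le k^2r^4\sum_{n=1}^{\infty}n^2|a_n|^2r^{2(n-1)}\le 16\,d^2k^2r^4\,\frac{1+11r^2+11r^4+r^6}{(1-r^2)^5},
\]
and then the Cauchy--Schwarz inequality together with $\sum_{n\ge 2}1/n^2=\pi^2/6-1$ yields
\[
\sum_{n=2}^{\infty}|b_n|r^n\le\sqrt{\tfrac{\pi^2}{6}-1}\,\Bigl(\sum_{n=2}^{\infty}n^2|b_n|^2r^{2n}\Bigr)^{1/2}\le\frac{4d\,kr^2}{(1-r^2)^2}\sqrt{\Bigl(\frac{r^6+11r^4+11r^2+1}{1-r^2}\Bigr)\Bigl(\frac{\pi^2}{6}-1\Bigr)}.
\]

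Adding the two estimates,
\[
\sum_{n=1}^{\infty}|a_n|r^n+\sum_{n=2}^{\infty}|b_n|r^n\le 4d\left[\frac{r}{(1-r)^2}+\frac{kr^2}{(1-r^2)^2}\sqrt{\Bigl(\frac{r^6+11r^4+11r^2+1}{1-r^2}\Bigr)\Bigl(\frac{\pi^2}{6}-1\Bigr)}\right],
\]
and the right-hand side is $\le d$ precisely when the bracketed quantity is $\le 1/4$. The left side of \eqref{rootS} is continuous and strictly increasing on $(0,1)$, tends to $0$ as $r\to 0^+$ and to $+\infty$ as $r\to 1^-$, so it equals $1/4$ at a unique $r_s=r_s(k)\in(0,1)$, and the inequality holds for all $r\le r_s$; substituting $k=(K-1)/(K+1)$ finishes the first assertion. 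For the bound $R(k)$, I would test with the Koebe function $\varphi(z)=h(z)=z/(1-z)^2$, which is univalent and maps $\ID$ onto $\IC\setminus(-\infty,-1/4]$ so that $d=1/4$, together with $g'(z)=kzh'(z)$; then $\omega(z)=kz$, so $f=h+\overline{g}$ is $K$-quasiconformal and sense-preserving, $a_n=n$, $b_1=0$, and $b_n=k(n-1)^2/n$ for $n\ge 2$. Summing the elementary series (split $(n-1)^2/n=n-2+1/n$) gives
\[
\sum_{n=1}^{\infty}|a_n|r^n+\sum_{n=2}^{\infty}|b_n|r^n=\frac{r(1-k+2kr)}{(1-r)^2}-k\log(1-r),
\]
which exceeds $d=1/4$ as soon as $r>R(k)$, $R(k)$ being the positive root of \eqref{eq:R1}.

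The only genuinely substantive point is the co-analytic estimate: the hypothesis $b_1=0$ must be fed in through $\omega(0)=0$ to gain the factor $|z|^2$, since without it the argument collapses to that of Theorem~\ref{thULH}. The remaining ingredients --- the two closed-form summations, the monotonicity of the left side of \eqref{rootS}, and the verification that the Koebe test function meets all the hypotheses --- are routine, and (as with Theorem~\ref{thHC}) I do not expect $r_s(k)$ and $R(k)$ to coincide, only that $r_s(k)$ is not sharp and $R(k)$ is an upper barrier for it.
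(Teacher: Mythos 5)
Your proposal is correct and follows essentially the same route as the paper's own proof: the coefficient bound $|a_n|\le 4n\,\dist(0,\partial\varphi(\ID))$ from Lemma~\ref{lemUd}/de Branges, Schwarz's lemma applied to the dilatation $\omega=g'/h'$ with $\omega(0)=0$ to gain the factor $|z|^2$, integration over $|z|=r$ with the fourth-moment identity, Cauchy--Schwarz with $\sum_{n\ge2}n^{-2}=\pi^2/6-1$, and the Koebe-type test pair $\varphi=h=z/(1-z)^2$, $g'=kzh'$ for the upper barrier $R(k)$. The only cosmetic differences (working with $d$ instead of $|\varphi'(0)|$ throughout, and the explicit monotonicity remark guaranteeing a unique root of \eqref{rootS}) do not change the argument.
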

\begin{proof} By assumption $\varphi $ is analytic and univalent in $\ID$, $\varphi (0)=0$  and thus, by ~\eqref{GTHMS} and Theorem~\ref{SCSB}, we have
$$\dist (0,\partial\varphi(\ID))\geq \frac{|\varphi' (0)|}{4}\quad{\rm and}\quad |a_n|\leq |\varphi' (0)|n ~\mbox{ for $n\geq 1$}
$$
so that
\begin{equation}\label{AnS}
\sum_{n=1}^{\infty}|a_n|r^n\leq|\varphi' (0)|\sum_{n=1}^{\infty}n r^n=|\varphi' (0)| \frac{r}{(1-r)^2}.
\end{equation}

As in the proof of Theorem \ref{thHC}, it follows that
\begin{equation*}
\begin{split}
\sum_{n=2}^{\infty}n^2 |b_n|^2 r^{2(n-1)}&\leq k^2r^2\sum_{n=1}^{\infty}n^2 |a_n|^2 r^{2(n-1)}\\
&=k^2|\varphi'(0)|^2 r^2\sum_{n=1}^{\infty}n^4 r^{2(n-1)}\\
&=k^2|\varphi'(0)|^2 \frac{r^2\left(r^6+11 r^4+11 r^2+1\right)}{\left(1-r^2\right)^5}.
\end{split}
\end{equation*}
By using the classical Cauchy-Schwarz inequality, we deduce that
\begin{equation*}
\begin{split}
\sum_{n=2}^{\infty}|b_n|r^n &\leq \sqrt{\sum_{n=2}^{\infty}n^2 |b_n|^2 r^{2n}}\sqrt{\sum_{n=2}^{\infty}\frac{1}{n^2}}\\
& \leq k|\varphi'(0)| \frac{r^2}{(1-r^2)^2}\sqrt{\frac{r^6+11 r^4+11 r^2+1}{1-r^2}}\sqrt{\frac{\pi^2}{6}-1}.
\end{split}
\end{equation*}
Consequently, by combining  \eqref{AnS} with the last inequality, we find that
\begin{equation*}
\begin{split}
&\sum_{n=1}^{\infty}|a_n|r^n+\sum_{n=2}^{\infty}|b_n|r^n\\
&\leq \left(\frac{r}{(1-r)^2}+\frac{kr^2}{(1-r^2)^2}\sqrt{\frac{r^6+11 r^4+11 r^2+1}{1-r^2}}\sqrt{\frac{\pi^2}{6}-1}\right)  |\varphi' (0)|\\
& \leq 4\left(\frac{r}{(1-r)^2}+\frac{kr^2}{(1-r^2)^2}\sqrt{\frac{r^6+11 r^4+11 r^2+1}{1-r^2}}\sqrt{\frac{\pi^2}{6}-1}\right) \dist (0,\partial\varphi(\ID))\\
& \leq \dist (0,\partial\varphi(\ID)).
\end{split}
\end{equation*}
This gives $r\leq r_s$, where $r_s=r_s(k)$ is the positive real root of the equation \eqref{rootS} in the interval $(0,1)$.

Finally, we consider the functions
\begin{equation*}
\varphi(z)=h(z)=\frac{z}{(1-z)^2}\quad\mbox{and}\quad g'(z)=k z h'(z).
\end{equation*}
So we find that
\begin{equation*}
|a_n|=n ~\mbox{ and }~ |b_n|=k\left (n+\frac{1}{n}-2\right ), \quad n\geq 2,
\end{equation*}
and thus,  we have
\begin{equation*}
\begin{split}
\sum_{n=1}^{\infty}|a_n|r^n+\sum_{n=2}^{\infty}|b_n|r^n
&=\sum_{n=1}^{\infty}n r^n+k\sum_{n=2}^{\infty}\left(n+\frac{1}{n}-2\right)r^n\\
&=\frac{r(1+k(2r-1))}{(1-r)^2}-k\log(1-r),
\end{split}
\end{equation*}
which is less than or equal to $1/4$ only in the case when  $r\leq R=R(k)$, where $R$ is the positive root of the equation \eqref{eq:R1}.
\end{proof}

\begin{corollary}\label{thHSa}
Suppose that $f(z)=h(z)+\overline{g(z)}=\sum_{n=1}^{\infty}a_n z^n+\overline{\sum_{n=2}^{\infty}b_n z^n}$ is a
sense-preserving harmonic mapping in $\ID$ and $h\prec\varphi$, where  $\varphi$ is univalent in $\ID$. Then
\begin{equation*}
\sum_{n=1}^{\infty}|a_n|r^n+\sum_{n=2}^{\infty}|b_n|r^n\leq \dist (0,\partial\varphi(\ID))
\end{equation*}
for $|z|=r\leq r_s = 0.155856\cdots$,
where $r_s$ is the positive real root of the equation
\begin{equation*}
\frac{r}{(1-r)^2}+\frac{r^2}{(1-r^2)^2}\sqrt{\left(\frac{r^6+11 r^4+11 r^2+1}{1-r^2}\right)\left(\frac{\pi^2}{6}-1\right)}=\frac{1}{4}
\end{equation*}
in the interval $(0,1)$. The number $0.1593\cdots$ cannot be replaced by the number greater than $R=0.161353\cdots$, where $R$ is the positive root of the equation
\begin{equation*}
\frac{2R^2}{(1-R)^2}-\log (1-R)=\frac{1}{4}.
\end{equation*}
\end{corollary}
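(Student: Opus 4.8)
The plan is to derive Corollary~\ref{thHSa} directly from Theorem~\ref{thHS} by letting $k\to 1^-$ (equivalently $K\to\infty$). The first step is to observe that a locally univalent, sense-preserving harmonic mapping $f=h+\overline{g}$ has dilatation $\omega=g'/h'$ with $|\omega(z)|<1$ throughout $\ID$, so the hypothesis $|g'(z)|\le k|h'(z)|$ required in Theorem~\ref{thHS} holds for \emph{every} $k\in(0,1)$. Hence the conclusion of Theorem~\ref{thHS} is available with radius $r_s(k)$ for all such $k$, and one may pass to the supremum over $k<1$. Since the left-hand side of \eqref{rootS} is continuous and increasing in $k$ for each fixed $r$, the root $r_s(k)$ is decreasing in $k$, and $\lim_{k\to 1^-}r_s(k)=r_s(1)$, the positive root of the equation obtained by setting $k=1$ in \eqref{rootS}. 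This is precisely the argument already used for Corollaries~\ref{thHC1a} and~\ref{thULHa}.

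Next I would record the $k=1$ specialization of \eqref{rootS}, namely
\begin{equation*}
\frac{r}{(1-r)^2}+\frac{r^2}{(1-r^2)^2}\sqrt{\left(\frac{r^6+11 r^4+11 r^2+1}{1-r^2}\right)\left(\frac{\pi^2}{6}-1\right)}=\frac{1}{4},
\end{equation*}
and check that its left-hand side $F(r)$ is continuous on $(0,1)$, satisfies $F(0^+)=0$, is strictly increasing, and tends to $+\infty$ as $r\to 1^-$, so that $F(r)=1/4$ has a unique solution; a numerical evaluation then gives $r_s=r_s(1)=0.155856\cdots$. For the sharpness part, substituting $k=1$ into \eqref{eq:R1} and simplifying $R(1-k+2kR)/(1-R)^2$ to $2R^2/(1-R)^2$ yields $\frac{2R^2}{(1-R)^2}-\log(1-R)=\frac14$, whose positive root is $R=0.161353\cdots$. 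The extremal pair $\varphi(z)=h(z)=z/(1-z)^2$ and $g'(z)=z\,h'(z)$ used in the proof of Theorem~\ref{thHS} with $k=1$ then witnesses that $r_s$ cannot be enlarged beyond $R$.

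Since everything is inherited from Theorem~\ref{thHS}, there is no substantive obstacle here; the only items requiring care are (i) justifying the $k=1$ substitution via sense-preservation, since local univalence gives only $|\omega|<1$ and not a uniform bound, and (ii) confirming uniqueness of the roots in $(0,1)$ together with the numerical values. The inequality $r_s=0.155856\cdots<R=0.161353\cdots$ also shows, just as Corollary~\ref{thHCa} does for Theorem~\ref{thHC}, that the radius produced by Theorem~\ref{thHS} is close to the best possible.
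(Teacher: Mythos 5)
Your overall plan --- specializing Theorem \ref{thHS} at $k=1$, checking the $k=1$ forms of \eqref{rootS} and \eqref{eq:R1}, and reusing the extremal pair $\varphi(z)=h(z)=z/(1-z)^2$, $g'(z)=z\,h'(z)$ --- is exactly what the paper intends (it gives no separate proof; compare the proofs of Corollaries \ref{thHC1a} and \ref{thULHa}). But the step by which you import Theorem \ref{thHS} is wrong as written: sense-preservation gives only the pointwise inequality $|g'(z)|<|h'(z)|$ in $\ID$, and this does \emph{not} imply $|g'(z)|\le k|h'(z)|$ throughout $\ID$ for every (or even for some) fixed $k\in(0,1)$; for instance $\omega(z)=z$ satisfies $|\omega(z)|<1$ everywhere yet $|\omega(z)|\le k$ fails for $|z|>k$. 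So the assertion that ``the hypothesis of Theorem \ref{thHS} holds for every $k\in(0,1)$,'' on which your supremum/limit over $k$ rests, is false; you flag this at the end as an item ``requiring care'' but never resolve it, so the argument as given has a genuine gap. (The limit over $k$ is also superfluous: since $r_s(k)\ge r_s(1)$, a single admissible $k<1$ would already yield the conclusion for $r\le r_s(1)$ --- the problem is precisely that no such uniform $k$ exists in general.)

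The gap is easy to close, in either of two standard ways. The cleanest is to run the proof of Theorem \ref{thHS} verbatim with $k=1$: since $b_1=g'(0)=0$ and $f$ is sense-preserving, the dilatation $\omega=g'/h'$ is analytic, maps $\ID$ into itself and vanishes at the origin, so Schwarz's lemma gives $|g'(z)|\le |z|\,|h'(z)|$, which is all that the proof uses (and Lemma \ref{lem:Kayumov2} is stated for $k\in[0,1]$, so $k=1$ is permitted); this is what ``allow $k\to 1$'' means in the paper. Alternatively, apply Theorem \ref{thHS} to the dilations $f_\rho(z)=h(\rho z)+\overline{g(\rho z)}$ with $\rho<1$, whose dilatation is bounded by $k_\rho=\max_{|z|\le\rho}|\omega(z)|<1$, observe that $h(\rho z)\prec\varphi$ and $r_s(k_\rho)\ge r_s(1)$, and let $\rho\to 1^-$ using monotone convergence of the coefficient sums. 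The remaining ingredients of your write-up --- uniqueness of the root of the $k=1$ equation, the numerical values $r_s=0.155856\cdots$ and $R=0.161353\cdots$, and the sharpness computation with $\varphi(z)=z/(1-z)^2$, $g'(z)=zh'(z)$ --- are correct and match the paper.
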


In view of  Corollaries \ref{thHCa} and \ref{thHSa} (see also Theorems \ref{thHC} and \ref{thHS} to propose general conjectures),
it is natural to propose in particular the following two conjectures.

\begin{conj}\label{conj1}
{\it
Suppose that $f(z)=h(z)+\overline{g(z)}=\sum_{n=0}^{\infty}a_n z^n+\overline{\sum_{n=2}^{\infty}b_n z^n}$ is a
sense-preserving harmonic mapping in $\ID$ and $h\prec \varphi$.
\begin{enumerate}
\item[{\rm (a)}]
If $\varphi$ is univalent and convex in $\ID$, then
\begin{equation}\label{conj-eq1}
\sum_{n=1}^{\infty}|a_n|r^n+\sum_{n=2}^{\infty}|b_n|r^n\leq \dist (\varphi(0),\partial\varphi(\ID))
\end{equation}
for $|z|=r\leq r_c= 0.299823\cdots$, where $r_c$ is the positive root of the equation \eqref{eq:R}.

\item[{\rm (b)}] If $\varphi$ is univalent in $\ID$, then the inequality \eqref{conj-eq1} holds
for $|z|=r\leq r_s = 0.161353\cdots$, where $r_s$ is the positive real root of the equation \eqref{eq:R1}.
\end{enumerate}
}
\end{conj}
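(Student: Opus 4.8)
\medskip
\noindent\textbf{A plan for a proof of Conjecture~\ref{conj1}.}
The first point is that the argument behind Theorems~\ref{thHC} and~\ref{thHS} cannot reach these radii: there the co-analytic sum $\sum_{n\ge2}|b_n|r^n$ is controlled through $\sum_{n\ge2}n^2|b_n|^2r^{2n}$ together with the Cauchy--Schwarz inequality, a step that irremovably introduces the factor $\sqrt{\pi^2/6-1}$ and so produces the strictly smaller constants $r_c(k)$, $r_s(k)$ of those theorems. Hence the plan is to estimate $\sum_{n\ge2}|b_n|r^n$ directly. After normalizing $|\varphi'(0)|=1$ (allowed, since hypothesis and conclusion are invariant under $\varphi\mapsto c\varphi$), Lemma~\ref{lemCd} reduces part~(a) to proving $\sum_{n\ge1}|a_n|r^n+\sum_{n\ge2}|b_n|r^n\le\tfrac12$ for $r\le r_c$, and Lemma~\ref{lemUd} reduces part~(b) to the same quantity being $\le\tfrac14$ for $r\le r_s$. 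The analytic part is already under sharp control: by Lemma~\ref{lemCd}, $|a_n|\le1$ in case~(a), so $\sum_{n\ge1}|a_n|r^n\le r/(1-r)$, with equality for $h(z)=1/(1-z)$; by Lemma~\ref{lemUd}, $|a_n|\le n$ in case~(b), so $\sum_{n\ge1}|a_n|r^n\le r/(1-r)^2$, with equality for the Koebe function $h(z)=z/(1-z)^2$.

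Since $b_1=0$, the dilatation $\omega=g'/h'$ vanishes at the origin, so Schwarz's lemma gives $\omega(z)=kz\chi(z)$ with $\chi(z)=\sum_{i\ge0}\chi_i z^i$ in the closed unit ball of $H^\infty$; comparing coefficients in $g'(z)=kz\chi(z)h'(z)$ yields
\begin{equation*}
nb_n=k\sum_{i=0}^{n-2}(n-1-i)\,a_{n-1-i}\,\chi_i,\qquad n\ge2.
\end{equation*}
The conjecture would follow from the \emph{coefficient-domination} estimates $|b_n|\le k(n-1)/n$ in case~(a) and $|b_n|\le k\bigl(n+\tfrac{1}{n}-2\bigr)$ in case~(b), valid for \emph{every} admissible pair $(h,\omega)$: these are precisely the $b_n$ of the extremal configurations $\varphi=h=1/(1-z)$, $g'=kzh'$ and $\varphi=h=z/(1-z)^2$, $g'=kzh'$ used in the proofs of Theorems~\ref{thHC} and~\ref{thHS}, so summing against $r^n$ and adding the analytic bounds above reproduces exactly the extremal totals, which --- by the very computations that gave \eqref{eq:R} and \eqref{eq:R1} --- are $\le\tfrac12$ for $r\le r_c$ and $\le\tfrac14$ for $r\le r_s$. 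Since the reverse inequality (sharpness) is established in those theorems, the conjecture is \emph{equivalent} to coefficient domination.

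To prove coefficient domination, fix $n$, set $w_i:=(n-1-i)\,a_{n-1-i}$ (so $|w_i|\le n-1-i$ in case~(a), $|w_i|\le(n-1-i)^2$ in case~(b)), and note that $nb_n/k$ is the value of the coefficient functional $L(\chi)=\sum_{i=0}^{n-2}w_i\chi_i$ on the unit ball of $H^\infty$. By $L^1$-duality for coefficient functionals, $\sup_{\|\chi\|_\infty\le1}|L(\chi)|$ equals the least $L^1$-norm of a function on the circle whose analytic part is the polynomial $\sum_i\overline{w_i}\,\zeta^i$; this is always at least $|w_0|$, with equality exactly when that polynomial is the analytic part of a non-negative function of $L^1$-norm $|w_0|$, i.e.\ (Carath\'eodory--Toeplitz / Fej\'er--Riesz) when the Hermitian Toeplitz matrix with first column $(w_0,w_1,\dots,w_{n-2})^{\top}$ is positive semidefinite. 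For the extremal weights $w_i^{*}=n-1-i$ and $w_i^{*}=(n-1-i)^2$ this holds, those Toeplitz matrices being the moment matrices of, respectively, the Fej\'er kernel $\bigl|\sum_{j=0}^{n-2}\zeta^j\bigr|^2$ and its convolution square --- non-negative, of total mass $n-1$ and $(n-1)^2$; hence $\sup_{\|\chi\|_\infty\le1}|L(\chi)|=|w_0^{*}|$ is exactly the extremal value $n|b_n|/k$. \textbf{The main obstacle} is to show that no admissible, non-extremal $h$ can make this quantity larger --- that $\sup_{\|\chi\|_\infty\le1}\bigl|\sum_i w_i\chi_i\bigr|$, as a function of $w$ subject only to $|w_i|\le n-1-i$ (resp.\ $\le(n-1-i)^2$) and to the finitely many de~Branges/Rogosinski constraints that $h\prec\varphi$ imposes, is maximized at $w=w^{*}$. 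Dual norms on $H^\infty$ are not coordinatewise monotone in general, so this should need the special shape of the weights $w_i=(n-1-i)a_{n-1-i}$, presumably an induction on $n$ or a direct analysis of the relevant Toeplitz determinants. Should individual coefficient domination fail, a natural fallback is the weaker \emph{sum domination} --- that $\sum_{n\ge1}|a_n|r^n+\sum_{n\ge2}|b_n|r^n$, taken over all admissible pairs $(h,\omega)$, is maximized at the extremal pair --- provable (if at all) by a global variational argument treating $h$ and $\omega$ jointly, which would still deliver the conjectured radii.
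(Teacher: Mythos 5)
This statement is a conjecture in the paper: the authors offer no proof, only the supporting evidence of Corollaries \ref{thHCa} and \ref{thHSa} and the sharpness computations in Theorems \ref{thHC} and \ref{thHS} (which is where the equations \eqref{eq:R} and \eqref{eq:R1} and the values $0.299823\cdots$, $0.161353\cdots$ come from). Your proposal does not close that gap. What you actually establish is a correct diagnosis (the Cauchy--Schwarz step through $\sum n^2|b_n|^2r^{2n}$, with its factor $\sqrt{\pi^2/6-1}$, cannot reach the conjectured radii) and a clean reduction: after normalizing $|\varphi'(0)|=1$ and using Lemmas \ref{lemCd} and \ref{lemUd} for the analytic part, the conjecture would follow from the coefficient-domination bounds $|b_n|\le k(n-1)/n$ (convex case) and $|b_n|\le k(n+1/n-2)$ (univalent case). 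But that domination claim is exactly the step you leave unproven --- you verify only that the extremal configurations $\varphi=h=1/(1-z)$, $g'=kzh'$ and $\varphi=h=z/(1-z)^2$, $g'=kzh'$ attain these values, which is the same sharpness computation already in the paper, and you explicitly flag as ``the main obstacle'' that no non-extremal admissible pair exceeds them. Since the dual-norm functional $\sup_{\|\chi\|_\infty\le1}|\sum_i w_i\chi_i|$ is, as you yourself note, not coordinatewise monotone in the weights, nothing in the $L^1$-duality/Fej\'er-kernel discussion constrains the non-extremal case; so the essential content of the conjecture is untouched.

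Two smaller points. First, your assertion that the conjecture is \emph{equivalent} to coefficient domination is an overstatement: domination of each $|b_n|$ implies the conjecture, but the converse does not follow from sharpness of the radius --- the extremal example only shows the radius cannot be enlarged, not that per-coefficient inequalities are necessary, and indeed your own fallback (``sum domination,'' or merely the inequality at the specific radii) is strictly weaker and would suffice. Second, for small $n$ the domination claim can be checked directly via Schwarz--Pick on $\chi$ (e.g.\ $n=2,3$ in the convex case), but already for moderate $n$ the interplay between the subordination constraints on $(a_m)$ and the Schwarz--Pick constraints on $(\chi_i)$ is nontrivial, and no induction or Toeplitz-determinant argument is sketched that would carry it out. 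As it stands, the proposal is a reasonable research plan consistent with the paper's evidence, not a proof.
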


%

\subsection*{Acknowledgments}
The research of the first author was supported by the Foundation of Guilin University of Technology under Grant No. GUTQDJJ2018080,
the Natural Science Foundation of Guangxi under Grant No. 2018GXNSFAA050005. The research was financially supported by Hunan Provincial Key
Laboratory of Mathematical Modeling and Analysis in Engineering (Changsha University of Science $\&$ Technology).
The work of the second author is supported  in part by Mathematical Research Impact Centric Support
(MATRICS) grant, File No.: MTR/2017/000367, by the Science and Engineering Research Board (SERB),
Department of Science and Technology (DST), Government of India.

\subsection*{Conflict of Interests}
The authors declare that there is no conflict of interests regarding the publication of this paper.


\end{document}